\DeclareMathOperator{\sign}{sign}
\DeclareMathOperator{\Ran}{Ran}
\DeclareMathOperator{\Ker}{Ker}
\DeclareMathOperator{\spec}{spec}
\DeclareMathOperator{\diag}{diag}
\DeclareMathOperator{\supp}{supp}
\DeclareMathOperator{\sing}{sing}
\newcommand{\abs}[1]{\lvert#1\rvert}
\newcommand{\Abs}[1]{\left\lvert#1\right\rvert}
\newcommand{\jap}[1]{\langle#1\rangle}
\newcommand{\bbT}{{\mathbb T}}
\newcommand{\bbR}{{\mathbb R}}
\newcommand{\bbC}{{\mathbb C}}
\newcommand{\bbN}{{\mathbb N}}
\newcommand{\bbZ}{{\mathbb Z}}
\newcommand{\wh}{\widehat}
\newcommand{\ess}{\text{ess}}
\newcommand{\ext}{\text{ext}}
\newcommand{\inv}{W}
\newcommand{\binv}{\mathbf{W}}
\newcommand{\bb}{\mathbf{b}}
\newcommand{\bq}{\mathbf{q}}
\newcommand{\bh}{{\mathbf{h}}}
\newcommand{\bg}{{\mathbf{g}}}
\newcommand{\bu}{{\mathbf{u}}}
\newcommand{\bsigma}{{\boldsymbol{\sigma}}}
\newcommand{\bP}{\mathbf{P}}
\newcommand{\bV}{\mathbf{V}}
\newcommand{\bR}{\mathbf{R}}
\newcommand{\bomega}{\boldsymbol{\omega}}
\newcommand{\calH}{{\mathcal H}}
\newcommand{\calF}{\mathcal{F}}
\newcommand{\calB}{\mathcal{B}}
\newcommand{\calU}{\mathcal{U}}
\newcommand{\Sch}{\mathbf{S}}
\numberwithin{equation}{section}
\theoremstyle{plain}
\newtheorem{theorem}{\bf Theorem}[section]
\newtheorem*{theorem*}{Theorem 1.1$'$}
\newtheorem{lemma}[theorem]{\bf Lemma}
\newtheorem{corollary}[theorem]{\bf Corollary}
\theoremstyle{definition}
\theoremstyle{remark}
\newtheorem*{remark*}{\bf Remark}
\newtheorem{remark}[theorem]{\bf Remark}
\newcommand{\wt}{\widetilde}
\newcommand{\eps}{\varepsilon}
\newcommand{\loc}{\mathrm{loc}}
\newcommand{\1}{\mathbbm{1}}
\newcommand{\Hank}{ H}
\newcommand{\bHank}{{ \mathbf{H}}} 
\newcommand{\Gank}{{\Gamma}}
\newcommand{\bGank}{{\mathbf{\Gamma}}}
\renewcommand{\[}{\begin{equation}}
\renewcommand{\]}{\end{equation}}
\begin{document}

\title{Localization principle for compact Hankel operators}

\author{Alexander Pushnitski}
\address{Department of Mathematics, King's College London, Strand, London, WC2R~2LS, U.K.}
\email{alexander.pushnitski@kcl.ac.uk}

\author{Dmitri Yafaev}
\address{Department of Mathematics, University of Rennes-1,
Campus Beaulieu, 35042, Rennes, France}
\email{yafaev@univ-rennes1.fr}

\subjclass[2010]{47B06, 47B35}

\keywords{Hankel operators, singular values, spectral asymptotics}

\begin{abstract}
In the power scale, the asymptotic behavior of the singular values of a compact Hankel operator 
is determined by the behavior of the symbol in a neighborhood of its singular support. 
In this paper, we discuss the localization principle which says that the contributions 
of disjoint parts of the singular support of the symbol to the asymptotic behavior of the singular values are independent of each other. 
We apply this principle to Hankel integral operators 
and to infinite Hankel matrices. 
In both cases, we describe a wide class of Hankel operators with  
power-like asymptotics of singular values. The leading term of this asymptotics is found explicitly.
\end{abstract}

\maketitle

\section{Introduction and main results}\label{sec.z}

\subsection{Hankel operators on the unit circle}

Hankel  operators admit various unitarily equivalent descriptions. We start
by recalling the definition of Hankel operators on the Hardy class $H^2(\bbT)$. 
Here  $\bbT$ is the unit circle in the complex plane, equipped with the 
normalized Lebesgue measure $dm(\mu)= (2\pi i \mu)^{-1} d\mu$, $\mu\in\bbT$;
the Hardy class $H^2(\bbT)\subset L^2(\bbT)$ is defined in the standard way as
the subspace of $L^2(\bbT)$ spanned by the functions $1, \mu, \mu^{2 },\dots$. 
Let $P_+: L^2(\bbT)\to H^2(\bbT)$ be the orthogonal projection onto $H^2(\bbT)$, 
and let $\inv$   be  the involution in $L^2(\bbT)$ defined by $(\inv  f)(\mu)=f(\bar{\mu})$. 
For a function  $\omega \in L^\infty (\bbT)$, which is called a \emph{symbol} in this context, 
the \emph{Hankel operator} $\Hank (\omega)$  is defined by the relation
\begin{equation}
\Hank(\omega)f=P_+(\omega \inv  f).
\label{eq:HH}
\end{equation}
Background information on the theory of Hankel  operators can be found e.g. in  
the books \cite{NK,Peller}.

Recall that the singular values of a compact 
operator $\Hank$ are defined by the relation 
$s_{n} (\Hank)=\lambda_{n} (\abs{\Hank})$, 
where $\{\lambda_{n} (\abs{\Hank})\}_{n=1}^\infty$
is the non-increasing sequence of  eigenvalues of the compact 
positive operator $\abs{\Hank}=\sqrt{\Hank^*\Hank}$ 
(enumerated with multiplicities taken into account).
The study of singular values of compact Hankel operators has a long
history and is linked to rational approximation, 
control theory and other subjects, see,  e.g. \cite{Peller}.  
In fact, this paper is in part motivated by its applications in \cite{Rat} 
to the rational approximation of functions with logarithmic singularities.

Singular values $s_{n} (H(\omega))$ of a Hankel operator with a symbol $\omega \in C^\infty({\bbT})$ 
decay faster than any power of $n^{-1}$ as $n\to\infty$. 
On the other hand, singularities of   $\omega$ generate a slower decay of singular values.
Here we will be interested in the case when the singular values behave  as 
some power of $n^{-1}$. 
Optimal upper estimates on singular values of Hankel operators are due to V.~Peller, 
see  \cite[Section 6.4]{Peller}. 
He found necessary and sufficient conditions on $\omega$ for the estimate
$$
s_{n} (H(\omega))\leq C n^{-\alpha}
$$
for some $\alpha>0$. These conditions are stated in terms of the Besov-Lorentz classes.

It is natural to expect that the asymptotic behavior of singular values is determined  by the 
behavior of the symbol $\omega$ in a neighborhood of its singular support. 
We justify this thesis and show that the contributions of the disjoint components of 
the singular support of $\omega$ to the asymptotics of the singular values of 
$\Hank(\omega)$ are independent of each other.  
We use the term ``localization principle" for this  fact. This principle is well 
understood in the context of the study of the essential spectrum \cite{Power}  and of the absolutely continuous spectrum
\cite{Howland1}   of non-compact Hankel operators.
Our aim here is to bring this principle to the fore in the question of the 
asymptotics of singular values of compact Hankel operators.

In our applications  the singular support 
of  $\omega $ consists of a finite number of points. We use the results of our previous publication \cite{II} (where the history of the problem is described) on the  asymptotic behavior of eigenvalues of certain classes of self-adjoint Hankel operators. The localization principle  allows us to combine the contributions of different singular points and thus 
to determine  the asymptotics of singular values for a wider (compared to  \cite{II}) class of Hankel operators.
In particular, for Hankel  matrices with oscillating matrix elements  
  we show that the contributions of different 
oscillating  terms to  the asymptotics of singular values are independent of each other.
We also establish similar results for Hankel integral operators whose integral kernels have a  singularity at some finite point $t_{0}\geq 0$ and several  oscillating terms at infinity.

\subsection{Localization principle}

Recall that the singular support $\sing\supp\omega$ of a function 
$\omega\in L^\infty (\bbT)$ is defined as the smallest closed set $X\subset \bbT$ 
such that $\omega\in C^\infty (\bbT\setminus X)$.
Localization principle for Hankel operators \eqref{eq:HH} is stated as follows.

\begin{theorem}\label{LOCAL}
Let $\omega_1,\omega_2,\dots,\omega_L$ be bounded functions on $\bbT$ such that
 \[
\sing\supp\omega_{\ell} \cap \sing\supp\omega_j =\varnothing, \quad \ell\neq j.
\label{eq:singsupp}
\]
Set $\omega=\omega_1+\dots+\omega_L$. 
  Then for all $p>0$ we have the relations
\begin{align}
\limsup_{n\to\infty}n s_n(H(\omega ) )^{p}
&\leq
\sum_{\ell=1}^L
\limsup_{n\to\infty} n s_n (H(\omega_{\ell}))^{p},
\label{eq:c2aL}
\\
\liminf_{n\to\infty}n s_n(H(\omega ))^{p}
&\geq
\sum_{\ell=1}^L
\liminf_{n\to\infty} n s_n(H(\omega_{\ell}))^{p} .
\label{eq:c2bL}
\end{align}
In particular,  
\begin{equation}
\lim_{n\to\infty}n s_n(H(\omega ) )^{p}
=
\sum_{\ell=1}^L
\lim_{n\to\infty} n s_n (H(\omega_{\ell}))^{p}
\label{c2aa}
\end{equation}
provided that all limits in the right-hand side exist.
\end{theorem}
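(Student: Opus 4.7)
The plan is to pass, by a smooth partition of unity on $\bbT$, to the case where the symbols have pairwise \emph{disjoint (full) supports}, and then exploit this disjointness to achieve additivity of the singular-value asymptotics.

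First, using the disjointness of the singular supports assumed in \eqref{eq:singsupp}, I would choose real $C^\infty(\bbT)$ bump functions $\varphi_1,\ldots,\varphi_L$ with $\varphi_\ell\equiv 1$ on a neighborhood of $\sing\supp\omega_\ell$ and $\supp\varphi_\ell\cap\supp\varphi_j=\varnothing$ for $\ell\ne j$. Set $\tilde\omega_\ell:=\varphi_\ell\omega_\ell$ and $\tilde\omega:=\sum_\ell\tilde\omega_\ell$. Since $(1-\varphi_\ell)$ vanishes near the only singular set of $\omega_\ell$, both $\omega_\ell-\tilde\omega_\ell=(1-\varphi_\ell)\omega_\ell$ and $\omega-\tilde\omega=\sum_\ell(1-\varphi_\ell)\omega_\ell$ lie in $C^\infty(\bbT)$. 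Any Hankel operator $H(\eta)$ with $\eta\in C^\infty(\bbT)$ lies in every Schatten ideal (by rapid decay of $\hat\eta(k)$), so $n\,s_n(H(\eta))^p\to 0$ for every $p>0$. Combined with Ky Fan's inequality $s_{n+m-1}(A+B)\le s_n(A)+s_m(B)$ (with $m=m(n)\to\infty$ chosen to grow slowly enough), this shows that $\limsup_n n\,s_n(\cdot)^p$ and $\liminf_n n\,s_n(\cdot)^p$ are unchanged under a $C^\infty$ perturbation of the symbol. Hence the theorem reduces to its analogue for $T_\ell:=H(\tilde\omega_\ell)$ and $T:=\sum_\ell T_\ell=H(\tilde\omega)$, whose symbols have pairwise disjoint supports.

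The core step exploits the pointwise identity $\tilde\omega_\ell\overline{\tilde\omega_j}\equiv 0$ for $\ell\ne j$. Viewing the Hankel operators as maps $L^2(\bbT)\to H^2(\bbT)$ via $H(\omega)=P_+M_\omega\inv$, one has the clean relation
\[
H(\tilde\omega_\ell)\,H(\tilde\omega_j)^{*} = P_+\,M_{\tilde\omega_\ell\overline{\tilde\omega_j}}\,P_+ = 0,\qquad \ell\ne j,
\]
where $*$ denotes the $L^2$-adjoint. Consequently, the adjoint ranges $\Ran H(\tilde\omega_\ell)^{*}\subset L^2$ are pairwise orthogonal, and (by symmetry of the identity in $\ell,j$) $H(\tilde\omega_j)$ annihilates $\Ran H(\tilde\omega_\ell)^{*}$ whenever $j\ne\ell$. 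Decomposing $v\in V:=\bigoplus_\ell\Ran H(\tilde\omega_\ell)^{*}=(\Ker T)^\perp$ uniquely as $v=\sum_\ell v_\ell$ with $v_\ell\in\Ran H(\tilde\omega_\ell)^{*}$, the off-diagonal terms in $Tv=\sum_{j,\ell}T_j v_\ell$ all vanish, so that $T|_V$ factors as $T|_V=\Sigma\circ S$, where $S\colon(v_\ell)_\ell\mapsto(T_\ell v_\ell)_\ell$ is a block-diagonal operator whose singular values are precisely the sorted merge of $\{s_n(T_\ell)\}_{n,\ell}$, and $\Sigma\colon(g_\ell)\mapsto\sum_\ell g_\ell$ is the row summation. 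Inserting this factorization into the min-max characterization of $s_n(T)$ will deliver both asymptotic inequalities \eqref{eq:c2aL}--\eqref{eq:c2bL}.

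The principal obstacle is that while the adjoint ranges $\Ran T_\ell^{*}$ are orthogonal, the ranges $\Ran T_\ell\subset H^2$ themselves are not, so $\Sigma$ is not an isometry (only $\|\Sigma\|\le\sqrt{L}$), and the naive bound $s_n(T)\le\sqrt{L}\,s_n(S)$ is off by a dimensional factor. To recover the sharp additivity, one must show that the non-orthogonality of the $\Ran T_\ell$ produces only a lower-order perturbation. Quantitatively, this reduces to an estimate on the cross products $T_\ell^{*}T_j$ ($\ell\ne j$), whose matrix entries
\[
(T_\ell^{*}T_j)_{mn}=\sum_{k\ge 0}\overline{\hat{\tilde\omega_\ell}(m+k)}\,\hat{\tilde\omega_j}(n+k)
\]
are tails of the globally vanishing Parseval-type sum $\int_\bbT\overline{\tilde\omega_\ell}\,\tilde\omega_j\,dm=0$. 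The required tail-decay estimate—confirming that these off-diagonal contributions do not disturb the leading term of the singular-value asymptotics—rests on the smoothness of each $\tilde\omega_\ell$ outside $\sing\supp\omega_\ell$ together with the specific Hankel matrix structure, and is expected to be the most technical part of the argument.
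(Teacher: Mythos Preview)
Your reduction to symbols $\tilde\omega_\ell$ with pairwise disjoint supports is correct and matches the paper's first move. The gap is in the ``core step''. The identity $H(\tilde\omega_\ell)\,H(\tilde\omega_j)^{*}=P_+M_{\tilde\omega_\ell\overline{\tilde\omega_j}}=0$ is valid only for the extensions $P_+M_\omega\inv\colon L^2(\bbT)\to H^2(\bbT)$, and these extensions are not even compact: $(P_+M_\omega\inv)(P_+M_\omega\inv)^{*}=P_+M_{|\omega|^2}P_+$ is the Toeplitz operator with symbol $|\omega|^2$. Thus the orthogonal decomposition $V=\bigoplus_\ell\Ran(P_+M_{\tilde\omega_\ell}\inv)^{*}$ sits in $L^2$, is not contained in $H^2$, and says nothing about the singular values $s_n(T_\ell)$ you actually need. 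For the genuine Hankel operators $T_\ell\colon H^2\to H^2$ one has $T_\ell T_j^{*}=P_+\tilde\omega_\ell(\inv P_+\inv)\overline{\tilde\omega_j}$, with the middle factor the projection onto $\inv H^2$; this does \emph{not} vanish even though $\tilde\omega_\ell\overline{\tilde\omega_j}\equiv 0$. So there is no exact orthogonality in \emph{either} direction: both $T_\ell^{*}T_j$ and $T_\ell T_j^{*}$ are nonzero and must be controlled, and the block-diagonal factorization $T|_V=\Sigma\circ S$ you build is not available.

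The paper disposes of both cross-products by a short smooth-kernel argument that replaces the tail-sum estimate you flag as ``the most technical part''. Since the $\supp\tilde\omega_\ell$ are pairwise disjoint, one may insert cutoffs $\psi_\ell\in C^\infty(\bbT)$ with $\psi_\ell\equiv1$ on $\supp\tilde\omega_\ell$ and still $\supp\psi_\ell\cap\supp\psi_j=\varnothing$; then $T_\ell^{*}T_j=P_+\inv\,\overline{\tilde\omega_\ell}\,(\psi_\ell P_+\psi_j)\,\tilde\omega_j\,\inv$, and $\psi_\ell P_+\psi_j$ is an integral operator on $L^2(\bbT)$ with kernel $\psi_\ell(\mu)\psi_j(\mu')\mu'/(\mu'-\mu)\in C^\infty(\bbT\times\bbT)$. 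Hence $\psi_\ell P_+\psi_j\in\Sch_0=\bigcap_{q>0}\Sch_{q,\infty}$, so $T_\ell^{*}T_j,\,T_\ell T_j^{*}\in\Sch_0$. With this \emph{asymptotic} orthogonality ($A_\ell^{*}A_j,\,A_\ell A_j^{*}\in\Sch_{p/2,\infty}^0$ for $\ell\ne j$), an abstract additivity result (the paper's Theorem~\ref{lma.c1}) yields \eqref{eq:c2aL}--\eqref{eq:c2bL} by comparing $A^{*}A$, $(JA_0)^{*}(JA_0)$ and $A_0^{*}A_0=\diag\{A_\ell^{*}A_\ell\}$ via Lemma~\ref{lma.a2}; no exact orthogonality is used anywhere.
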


In applications, the upper and  lower limits in this  theorem usually coincide.
However, we prefer to work with these limits separately   because it is 
more general and, at the same time,  it is technically more convenient.

\subsection{Discussion}
Theorem~\ref{LOCAL} can be equivalently stated in terms of the
counting functions. For a compact operator $\Hank$, the singular value 
counting function is defined by 
\begin{equation}
n(\eps;\Hank)
=
\#\{ n: s_n (\Hank)>\eps\}, 
\quad \eps>0.
\label{eq:CF}
\end{equation}
We have 
$$
\limsup_{n\to\infty}n s_n(\Hank)^{p}
=
\limsup_{\eps\to 0}\eps^{p} n(\eps;\Hank)
$$
and similarly for the lower limits. 
Thus, focussing for simplicity on the case when the limits
in the right hand side exist and are finite, we can rewrite \eqref{c2aa} as
\begin{equation}
n(\eps;\Hank(\omega))
=
\sum_{\ell=1}^L
n(\eps;\Hank(\omega_\ell))+o(\eps^{-p}), \quad \eps\to 0.
\label{c2aaa}
\end{equation}

Our proof of Theorem~\ref{LOCAL} consists of two steps. The first one is to check that  under the 
assumption \eqref{eq:singsupp} the operators
$\Hank(\omega_\ell)$ are \emph{asymptotically  orthogonal} in the sense that
for all $j\not=\ell$ and all $\alpha>0$ we have
\[
s_n( H(\omega_{\ell}) ^* H(\omega_{j}))=O(n^{-\alpha}), 
\quad 
s_n( H(\omega_{\ell})  H(\omega_{j})^*)=O(n^{-\alpha}), \quad n\to\infty.
\label{z6}
\]
This result follows from the reduction of the products of Hankel 
operators in \eqref{z6} to integral operators in $L^2(\bbT)$ with smooth kernels.

The second step is to show that \eqref{z6} implies relations  \eqref{eq:c2aL} and  \eqref{eq:c2bL}.
This fact is not specific for Hankel operators.
In order to get some intuition into its proof, 
let us suppose for a moment that the operators $ \Hank(\omega_\ell)$
are pairwise \emph{orthogonal} in the sense that 
\begin{equation}
\Hank(\omega_j)^*\Hank(\omega_\ell)=0 
 \quad\text{ and }\quad
\Hank(\omega_j)\Hank(\omega_\ell)^*=0, 
\quad \forall j\not=\ell.
\label{z11}
\end{equation}
Then 
$$
\Ran \Hank(\omega_j)\perp\Ran \Hank(\omega_\ell)
\quad\text{ and }\quad
\Ran \Hank(\omega_j)^*\perp\Ran \Hank(\omega_\ell)^*,
\quad \forall j\not=\ell.
$$
Thus, representing the sum $\Hank(\omega)=\Hank(\omega_1)+\dots+\Hank(\omega_L)$ 
as a ``block-diagonal'' operator acting from 
$\oplus_{\ell=1}^L \overline{\Ran \Hank(\omega_\ell)^*}$  to 
$\oplus_{\ell=1}^L \overline{\Ran \Hank(\omega_\ell)}$, 
we conclude that 
$$
n(\eps;\Hank(\omega))
=
\sum_{\ell=1}^L n(\eps;\Hank(\omega_\ell)), \quad \forall\eps>0.
$$
 Of course, the orthogonality condition \eqref{z11} is too strong.  In fact,
  an operator theoretic
result,  Theorem~\ref{lma.c1}, shows that the asymptotic  orthogonality  \eqref{z6} with 
$\alpha>2/p$ ensures
the   relation \eqref{c2aaa}.

Representing Hankel operators in the basis $\{\mu^j\}_{j=0}^\infty$ in $H^2(\bbT)$, 
one obtains the class of infinite Hankel matrices of the form $\{h(j+k)\}_{j,k=0}^\infty$
in the space $\ell^2(\bbZ_+)$. 
We give an application of the localization principle to such Hankel matrices in Theorem~\ref{thm.a4}.
Although the localization principle in the form stated above (Theorem~\ref{LOCAL}) is
quite natural, this application looks far less obvious.

Theorem~\ref{LOCAL} can be equivalently stated (see Theorem~\ref{LOCALC}) 
in terms of Hankel operators $\bHank(\bomega)$ acting in the Hardy space 
$H_+^{2}({\bbR})$ of functions analytic in the upper half-plane. 
In this case the symbol $\bomega (x) $ is a   function
of $x\in {\bbR}$. This leads to new results for Hankel operators 
defined as integral operators in the space $ L^2(\bbR_+)$.

We will refer to the Hankel operators in $H^2(\bbT)$ and in $\ell^2(\bbZ_+)$
as to the discrete case, and to the Hankel operators in $H^2_+(\bbR)$ and in $L^2(\bbR_+)$
as to the continuous case. 
We will use boldface font for objects associated with the continuous case. 
We have tried to make exposition in the discrete and continuous cases parallel as much as possible. 

\subsection{Related work}\label{sec.z3}

Recall that for a bounded operator $H$, the non-zero parts of the operators 
$$
\begin{pmatrix}
\abs{H} & 0
\\
0  & -\abs{H}
\end{pmatrix}
\quad\text{ and }\quad 
\begin{pmatrix}
0 & H
\\
H^*  & 0
\end{pmatrix}
$$
are unitarily equivalent. Therefore various spectral results for $\abs{\Hank(\omega)}$ are equivalent to those for the self-adjoint Hankel operator with  the matrix valued symbol
$$
\Omega(\mu)= 
\begin{pmatrix}
0 & \omega (\mu)
\\
\overline{\omega(\bar{\mu})}  & 0
\end{pmatrix}.
$$
In particular, the study of the singular values of $H(\omega)$ is equivalent to the study of 
the eigenvalues of the Hankel operator with the symbol $\Omega(\mu)$.

Some forms of localization principle are known in the study of the continuous spectrum of $\abs{\Hank(\omega)}$.
As far as we are aware, 
the idea of separation of singularities of the symbol goes back to the work \cite{Power}
of S.~R.~Power  on the essential spectrum $\spec_{\mathrm{ess}} $ of Hankel operators   with piecewise continuous
symbols $\omega$. Let $a_{j}\in{\bbT}$ be the points where $\omega$ has the jumps
$$
\varkappa(a_{j})=
\lim_{\varepsilon\to +0} \omega (a_{j}e^{i\varepsilon})-\lim_{\varepsilon\to +0} \omega (a_{j}e^{-i\varepsilon})\neq 0.
$$
Although Power was interested in the essential spectrum of $\Hank(\omega)$ (which we do not discuss here), 
it follows from the matrix version of his results that
\begin{equation}
\spec_{\mathrm{ess}} (\abs{\Hank(\omega)})
=   
[0,M], \quad M=\tfrac12\sup_{ a_{j}\in {\bbT}}\abs{\varkappa (a_{j})},
\label{eq:XX1y}
\end{equation}
where the supremum is taken over all points $a_j$ where $\omega$ has a jump.

A description of  the absolutely continuous spectrum of $\abs{\Hank(\omega)}$ 
with piecewise continuous symbol $\omega$
follows from the matrix version of the results of Howland \cite{Howland1}, where
the trace class method of scattering theory was used. 
This question  was also 
studied in our previous paper \cite{PY1} by using  
the so-called smooth method of scattering theory. 
In both cases, 
under some mild additional   assumptions, including the condition   
that $\omega$ has finitely many jumps, it can be shown that 
\begin{equation}
\spec_{\mathrm{ac}} (\abs{\Hank(\omega)})=   \bigcup_{ a_{j}\in {\bbT}}\,  [0, \tfrac12 |\varkappa (a_{j}) | ] .
\label{eq:XX1x}
\end{equation}
Every term in the right-hand side of \eqref{eq:XX1x} gives its own band of the absolutely
continuous spectrum of  multiplicity one.
Thus, formula \eqref{eq:XX1x} can be regarded as the continuous spectrum analogue of the localisation
principle discussed in this paper: the contributions 
of different jumps of $\omega$ to $\spec_{\rm ac} (\abs{\Hank(\omega)})$ are independent of each other.  
Of course, formulas  \eqref{eq:XX1y} and  \eqref{eq:XX1x} are consistent with each other.

\subsection{The structure of the paper}
In Section~\ref{sec.y} we prove the localization principle in the discrete case (Theorem~\ref{LOCAL}) and 
also state and prove its analogue in the continuous case (Theorem~\ref{LOCALC}).
In Section~\ref{sec.b}, we describe the applications of localization principle to 
the Hankel operators acting in $\ell^2(\bbZ_+)$. The main result is stated as Theorem~\ref{thm.a4}  and its proof is given in 
Section~\ref{sec.a}. 
In Section~\ref{sec.c} we give applications to integral Hankel operators in 
$L^2(\bbR_+)$. The main result is stated as Theorem~\ref{thm.d3}  and its proof is given in Section~\ref{sec.d}. 
In Section~\ref{sec.f} we consider integral Hankel operators whose kernels have local singularities in $\bbR_{+}$.

\subsection{Some notation}
For $\omega\in L^2(\bbT)$, the Fourier coefficients of $\omega$ are denoted as usual by
$$
\wh \omega(j)= \int_{\bbT}  \omega(\mu) \mu^{-j} dm(\mu),
\quad 
j\in\bbZ.
$$
We will consistently make use of the following constant, which appears
in our asymptotic formulas: 
\begin{equation}
v(\alpha)=2^{-\alpha}
\pi^{1-2\alpha}
\big(B(\tfrac1{2\alpha},\tfrac12)\big)^\alpha, \quad \alpha>0;
\label{a7}
\end{equation}
here $B(\cdot,\cdot)$ is the Beta function. 
We make a standing assumption that the exponents 
$p>0$  and $\alpha>0$ are related by $\alpha=1/p$.

\section{Proof of localization principle}\label{sec.y}

In this section, we prove  Theorem~\ref{LOCAL} as well as a similar statement, Theorem~\ref{LOCALC}, 
for Hankel operators  in the Hardy space $H^2_+(\bbR)$ of functions analytic in the upper half-plane.

\subsection{Preliminaries}

Let $\calB$ be the algebra of bounded operators in a Hilbert space $\mathcal H$, 
and let $\Sch_{\infty}$ be the ideal of compact operators in $\calB$.
For $p>0$, the weak Schatten class $\Sch_{p,\infty}$ consists of all compact operators $A$ 
such that 
$$
\sup_{n} n s_n(A)^p<\infty.
$$
The subclass $\Sch_{p,\infty}^0\subset  \Sch_{p,\infty}$ 
is defined by the condition
$$
\lim_{n\to\infty} n s_n(A)^p =0.
$$
It is well known that both $\Sch_{p,\infty}$ and $\Sch_{p,\infty}^0$ are ideals of $\calB$; 
in particular, they are linear spaces. 
Of course $A\in\Sch_{p,\infty}$ (or $A\in\Sch_{p,\infty}^0$) if and only if the same is true for its adjoint $A^*$.
We set $\Sch_0=\cap_{p>0}\Sch_{p,\infty}$, that is, 
\begin{equation}
A\in\Sch_0\quad \Leftrightarrow\quad s_n(A)=O(n^{-\alpha}),\quad n\to\infty,\quad \forall \alpha>0.
\label{eq:sss}\end{equation}

First we recall a classical result in perturbation theory (see e.g. \cite[Theorem 11.6.8]{BSbook})
 on the spectral stability of singular values. 
\begin{lemma}\label{lma.a2}
Let $A\in\Sch_{\infty}$ and $B\in\Sch_{p,\infty}^0$ for some $p>0$. 
Then 
\begin{align}
\limsup_{n\to\infty}n s_n(A+B)^p
&=
\limsup_{n\to\infty}n s_n(A)^p,
\label{eq:Lsup}\\
\liminf_{n\to\infty}n s_n(A+B)^p
&=
\liminf_{n\to\infty}n s_n(A)^p.
\label{eq:Linf}\end{align}
\end{lemma}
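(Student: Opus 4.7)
The plan is to combine the classical Ky Fan inequality $s_{m+k-1}(A+B) \leq s_m(A) + s_k(B)$, valid for all integers $m,k\geq 1$, with the elementary numerical estimate $(x+y)^p \leq (1+\eta) x^p + C(\eta,p) y^p$ for $x,y\geq 0$ and arbitrary $\eta>0$. The strategy is to prove the one-sided bound
\begin{equation*}
\limsup_{n\to\infty} n s_n(A+B)^p \leq \limsup_{n\to\infty} n s_n(A)^p,
\end{equation*}
and then recover the reverse inequality by symmetry: writing $A=(A+B)+(-B)$ and noting that $-B$ is also in $\Sch_{p,\infty}^0$, the one-sided bound applied to the pair $(A+B,-B)$ gives the opposite direction.

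To execute the one-sided bound, I would fix $\eta>0$ and a small $\eps\in(0,1)$, and then choose the splitting $k=\lfloor\eps n\rfloor$, $m=n-k+1$, so that the Ky Fan inequality combined with the $(x+y)^p$ estimate yields
\begin{equation*}
n s_n(A+B)^p \leq (1+\eta)\, n s_m(A)^p + C(\eta,p)\, n s_k(B)^p.
\end{equation*}
For the first term, $m\geq(1-\eps)n$ gives $n s_m(A)^p \leq (1-\eps)^{-1} m s_m(A)^p$; since $m=m(n)=n-\lfloor\eps n\rfloor+1$ runs through every sufficiently large integer as $n$ varies (each value attained with bounded multiplicity), the $\limsup$ over $n$ coincides with $(1-\eps)^{-1}\limsup_j j s_j(A)^p$. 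For the second term, $k\gtrsim \eps n$ combined with the hypothesis $k s_k(B)^p\to 0$ forces $n s_k(B)^p\to 0$. Passing to $\limsup$ in $n$, then letting $\eta\to 0$ and $\eps\to 0$, closes \eqref{eq:Lsup}. For \eqref{eq:Linf} the same estimate applies verbatim, since the subsequence identity for $m(n)$ preserves $\liminf$ as well as $\limsup$.

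The whole argument is essentially bookkeeping around the Ky Fan inequality, and I do not foresee a serious obstacle. The only mildly subtle point will be verifying that $m(n)=n-\lfloor\eps n\rfloor+1$ indeed attains every large integer with bounded multiplicity, so that the passage from a $\limsup$ (or $\liminf$) over $n$ to a $\limsup$ (or $\liminf$) over $j$ is legitimate; this is an elementary exercise in counting.
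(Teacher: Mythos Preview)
Your proposal is correct; the argument via the Ky Fan inequality with the $\lfloor\eps n\rfloor$-splitting and the symmetry trick $A=(A+B)+(-B)$ is the standard proof of this perturbation lemma. The paper itself does not give a proof: it states the result as classical and refers to Theorem~11.6.8 in Birman--Solomyak, noting only that the formulation here is slightly more general since $A$ is not assumed to lie in $\Sch_{p,\infty}$. Your sketch handles that generality without difficulty (the limits in \eqref{eq:Lsup} and \eqref{eq:Linf} are simply allowed to be $+\infty$), and your verification that $m(n)=n-\lfloor\eps n\rfloor+1$ hits every large integer (because $m(n+1)-m(n)\in\{0,1\}$ and $m(n)\to\infty$) is exactly what is needed for the $\liminf$ part. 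There is nothing further to compare.
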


Lemma~\ref{lma.a2} is stated   in a slightly more general form  than usual 
(see, e.g., Theorem 11.6.8 in \cite{BSbook}) because we do not require that 
$A\in\Sch_{p,\infty}$ and hence the limits in \eqref{eq:Lsup} and \eqref{eq:Linf} 
may be infinite; in this case Lemma~\ref{lma.a2} means that both sides in \eqref{eq:Lsup} and \eqref{eq:Linf} are infinite simultaneously. 
Note   that if $A\not\in\Sch_{p,\infty}$, then the expression \eqref{eq:Lsup} is infinite, but the 
expression \eqref{eq:Linf} may be finite. Lemma~\ref{lma.a2}
can also be equivalently stated in terms of the singular value counting functions
$n(\eps,A)$ defined by \eqref{eq:CF}.

\subsection{Asymptotically orthogonal operators}
 Note the implication 
\begin{equation}
A\in\Sch_{p,\infty}, 
\quad 
B\in\Sch_{p,\infty}
\quad 
\Rightarrow
\quad
A^*B\in\Sch_{p/2,\infty}, \quad
A B^*\in\Sch_{p/2,\infty}
\label{c14x}
\end{equation}
(see, e.g. \cite[Theorem 11.6.9]{BSbook}).
We   say that the operators $A$ and $B$ in $\Sch_{p,\infty}$ are asymptotically  orthogonal
if the class $\Sch_{p/2,\infty}$ in the right side of \eqref{c14x} can be replaced by its subclass $\Sch_{p/2,\infty}^0$.
The following theorem allows us to study singular values of sums of asymptotically  orthogonal operators.
This result is the key operator theoretic ingredient of our construction.

\begin{theorem}\label{lma.c1}
Let $p>0$.  
Assume that $A_1,\dots,A_L\in\Sch_{\infty}$ and
\begin{equation}
A_\ell^*A_j\in\Sch_{p/2,\infty}^0,
\quad 
A_\ell A_j^*\in\Sch_{p/2,\infty}^0
\quad 
\text{ for all $\ell\not=j$.}
\label{c1}
\end{equation}
Then for   $A= A_1+\dots+A_L$, we have
\begin{align}
\limsup_{n\to\infty}n s_n(A )^{p}
&\leq
\sum_{\ell=1}^L
\limsup_{n\to\infty} n s_n(A_\ell)^{p},
\label{c2a}
\\
\liminf_{n\to\infty}n s_n(A )^{p} 
&\geq
\sum_{\ell=1}^L
\liminf_{n\to\infty} n s_n(A_\ell)^{p}.
\label{c2b}
\end{align}
In particular, 
$$
\lim _{n\to\infty}n s_n(A)^{p}
=
\sum_{\ell=1}^L
\lim_{n\to\infty} n s_n(A_\ell)^{p}
$$
provided that all limits in the right-hand side exist.
\end{theorem}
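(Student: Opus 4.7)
The plan is to reduce the sum $A=A_1+\cdots+A_L$, acting on $\calH$, to the block-diagonal operator $T=\diag(A_1A_1^*,\dots,A_LA_L^*)$ on $\calH^L$, whose eigenvalues are precisely the combined sequence $\{s_m(A_\ell)^2\}_{\ell,m}$ rearranged in decreasing order. This is carried out by two successive applications of Lemma~\ref{lma.a2}, with the intermediate link provided by the column operator $\mathbf{A}\colon\calH\to\calH^L$, $\mathbf{A} f=(A_1 f,\dots,A_L f)$. A direct computation gives the algebraic identities
$$
\mathbf{A}^*\mathbf{A}=\sum_{\ell=1}^L A_\ell^*A_\ell,\qquad
\mathbf{A}\mathbf{A}^*=(A_\ell A_j^*)_{\ell,j=1}^L.
$$

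For the first reduction, expanding the product $A^*A=(\sum_\ell A_\ell^*)(\sum_j A_j)$ and separating diagonal from off-diagonal terms yields $A^*A-\mathbf{A}^*\mathbf{A}=\sum_{\ell\neq j}A_\ell^*A_j$, which lies in $\Sch_{p/2,\infty}^0$ by the first half of \eqref{c1}. Since both $A^*A$ and $\mathbf{A}^*\mathbf{A}$ are positive compact operators with $s_n(A)^p=s_n(A^*A)^{p/2}$ and $s_n(\mathbf{A})^p=s_n(\mathbf{A}^*\mathbf{A})^{p/2}$, Lemma~\ref{lma.a2} applied with exponent $p/2$ delivers $\limsup_n n\,s_n(A)^p=\limsup_n n\,s_n(\mathbf{A})^p$, and the analogous equality for $\liminf$. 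For the second reduction, the off-diagonal part of $\mathbf{A}\mathbf{A}^*$ is a finite sum of block operators on $\calH^L$, each carrying a single entry $A_\ell A_j^*$ ($\ell\neq j$) at position $(\ell,j)$ and zeros elsewhere. The second half of \eqref{c1} ensures that each such block lies in $\Sch_{p/2,\infty}^0$, whence $\mathbf{A}\mathbf{A}^*-T\in\Sch_{p/2,\infty}^0$. A second application of Lemma~\ref{lma.a2} then gives $\limsup_n n\,s_n(\mathbf{A})^p=\limsup_n n\,s_n(T)^{p/2}$, and the corresponding statement for $\liminf$.

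The remaining step is purely elementary. Since $T$ is a direct sum, its singular value counting function splits as
$$
n(\eps,T)=\sum_{\ell=1}^L n(\sqrt{\eps},A_\ell),\qquad \eps>0.
$$
Multiplying by $\eps^{p/2}$, setting $\delta=\sqrt{\eps}$, and using the standard inequalities $\limsup\sum f_\ell\leq\sum\limsup f_\ell$ and $\liminf\sum f_\ell\geq\sum\liminf f_\ell$ as $\delta\to 0$, we obtain \eqref{c2a} and \eqref{c2b}. The only technical subtlety I anticipate is that Lemma~\ref{lma.a2} must be applied even when some of the right-hand sides in \eqref{c2a}--\eqref{c2b} are infinite; this is precisely why the authors formulated the lemma without the assumption $A\in\Sch_{p,\infty}$, so that both iterations of the perturbation argument remain valid in that regime.
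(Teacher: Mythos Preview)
Your argument is correct and is essentially the paper's own proof. The only cosmetic difference is that you build the auxiliary operator as a column map $\mathbf{A}\colon\calH\to\calH^L$, $f\mapsto(A_1f,\dots,A_Lf)$, whereas the paper uses the row map $JA_0\colon\calH^L\to\calH$, $(f_1,\dots,f_L)\mapsto\sum_\ell A_\ell f_\ell$; this simply swaps which of $\mathbf{A}^*\mathbf{A}$ and $\mathbf{A}\mathbf{A}^*$ lives on $\calH$ versus $\calH^L$, and correspondingly interchanges the roles of the two hypotheses in \eqref{c1}, but the two applications of Lemma~\ref{lma.a2} and the block-diagonal counting step are identical.
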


\begin{proof}
Let us prove the first relation \eqref{c2a}; the second one is proven in the same way.
We argue in terms of   counting functions \eqref{eq:CF}.
For an operator $A\in\Sch_{\infty}$, let us denote
$$
\Delta_p(A)=\limsup_{\varepsilon\to  0}\varepsilon^{1/p} n(\eps;A)
$$
(this limit may be infinite).
Then our aim is to prove that 
\begin{equation}
\Delta_{p}(A)
\leq
\sum_{\ell=1}^L \Delta_{p}(A_\ell),
\label{c2c}
\end{equation}
which is \eqref{c2a} in different notation. 
Put
$$
\calH^L=\underbrace{\calH\oplus\dots\oplus\calH}_{\text{$L$ terms}}
$$
and let
$A_0=\diag\{A_1,\dots,A_L\}$ in $\calH^L$, i.e., 
$$
A_0(f_1,\dots,f_L)=(A_1f_1,\dots,A_Lf_L).
$$
Since
\begin{equation}
A_0^*A_0=\diag\{A_1^*A_1,\dots,A_L^*A_L\},
\label{eq:C6}
\end{equation}
we see that
$$
n(\eps;A_0)
=
\sum_{\ell =1}^L
n(\eps;A_\ell)
$$
and therefore, multiplying by $\eps^{1/p}$, taking $\lim\sup$ as $\eps\to0$ and using the 
subadditivity of $\lim\sup$, we obtain
\begin{equation}
\Delta_{p/2}(A_0^*A_0)
\leq
\sum_{\ell=1}^L \Delta_{p/2}(A_\ell^* A_\ell)
=
\sum_{\ell=1}^L \Delta_{p}(A_\ell).
\label{c6}
\end{equation}

Next, let 
$J:\calH^L\to\calH$ be the operator given by 
$$
J(f_1,\dots,f_L)=f_1+\dots+f_L
\quad \mbox{so that} \quad 
J^* f=(f,\dots,f).
$$
Then
$$
J A_0(f_1,\dots,f_L)=A_1f_1 + \dots +A_Lf_L
$$
and
$$
(J A_0)^* f=(A_1^*f,\dots,A_L^*f).
$$
It follows that
\begin{equation}
(J A_{0}) (J A_0)^* f= (A_1 A_1^* +\dots +A_L A_L^*) f
\label{eq:cc}
\end{equation}
and the operator $(J A_0)^*(J A_0)$ is a ``matrix'' in $\calH^L$ given by
\begin{equation}
(J A_0)^*(J A_0)
=
\begin{pmatrix}
A_1^*A_1 & A_1^*A_2 & \dots & A_1^* A_L
\\
A_2^*A_1 & A_2^*A_2 & \dots & A_2^* A_L
\\
\vdots & \vdots & \ddots & \vdots
\\
A_L^*A_1 & A_L^*A_2 & \dots  & A_L^* A_L
\end{pmatrix}.
\label{eq:CC}
\end{equation}

According to \eqref{eq:C6} and \eqref{eq:CC}  we have
\[
(J A_0)^*(J A_0)-A_0^*A_0\in\Sch_{p/2,\infty}^0.
\label{c3a}
\]
Indeed,   the ``matrix'' of the operator in \eqref{c3a}  has zeros on the 
 diagonal, and its
 off-diagonal elements are given by $A_\ell^* A_j$, $\ell\not=j$. Thus    \eqref{c3a} follows from the first assumption  \eqref{c1}. 
Therefore Lemma~\ref{lma.a2} implies that
$$
\Delta_{p/2}((J A_0)^*(J A_0))
=
\Delta_{p/2}(A_0^*A_0)
$$
or
\[
\Delta_{p/2}((J A_0)(J A_0)^*)
=
\Delta_{p/2}(A_0^*A_0)
\label{c3b}
\]
because for any compact operator $T$ the non-zero singular values of $T^*T$ and $TT^*$ coincide.

Further, since
$
AA^*=
\sum_{\ell,j=1}^L A_\ell A_j^*,
$
it follows from \eqref{eq:cc} and the second assumption  \eqref{c1} that
$$
AA^*-(J A_0)(J A_0)^* = \sum_{j\not=\ell}A_\ell A_j^* \in\Sch_{p/2,\infty}^0.
$$
Using again Lemma~\ref{lma.a2}, from here we obtain
$$
\Delta_{p}(A)=\Delta_{p/2}(AA^*)=\Delta_{p/2}((J A_0)(J A_0)^*).
$$
Combining the last equality with    \eqref{c3b}, we see that
$\Delta_p(A)=\Delta_{p/2}(A_0^*A_0)$. Thus 
\eqref{c6} yields the relation \eqref{c2c}. 
\end{proof}

Under  slightly  more restrictive assumptions Theorem~\ref{lma.c1} appeared first in
\cite[Theorem 3]{BS2}.  Our  proof is quite different from that of \cite{BS2}. 

\begin{remark}
Let us mention two known statements that are similar in spirit to Theorem~\ref{lma.c1}.
Below $A_1,\dots,A_L$ are bounded operators and $A=A_1+\dots+A_L$. 

(i) If the products $A_\ell^*A_j$, $A_\ell A_j^*$ are compact for all $j \not=\ell$, then 
for the essential spectra of $A$ one has the formula
$$
\spec_\ess(A)\cup\{0\}=\bigcup_{\ell=1}^L \spec_\ess(A_\ell),
$$
see,  e.g. \cite[Section 10.1]{Peller}.

(ii) 
If $A_1,\dots,A_L$ are   self-adjoint operators such that $A_\ell A_j$ are 
trace class for all $j\not=\ell$, then the absolutely continuous part of $A$ is unitarily 
equivalent to the orthogonal sum of the absolutely continuous parts of the operators $A_\ell$.
This is known as Ismagilov's theorem, see \cite{Ismagilov}.
\end{remark}

\subsection{Proof of localization principle for Hankel operators in $H^2(\bbT)$}\label{sec.z3H}

First we state two well-known facts that will be needed for the proof of Theorem~\ref{LOCAL}    given at the end of the subsection.

We  recall that the Hankel operators $H(\omega)$ are defined by \eqref{eq:HH}; the   class $\Sch_0 $ is defined by \eqref{eq:sss}. 

\begin{lemma}\label{mm}
\begin{enumerate}[\rm (i)]
\item
Let $K$ be an integral operator in $L^2(\bbT)$ with an integral kernel of the class 
$C^\infty(\bbT\times\bbT)$. Then $K\in\Sch_0$. 
\item
Let $\omega\in C^\infty(\bbT)$; then $\Hank(\omega)\in\Sch_0$. 
\end{enumerate}
\end{lemma}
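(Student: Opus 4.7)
The plan is to prove both parts by showing that, for each $N$, the operator in question can be approximated by a rank-$O(N)$ operator with error decaying faster than any power of $1/N$ in operator norm. By the min-max principle $s_n(\cdot) = \inf_{\rank F < n} \|\cdot - F\|$, this immediately yields $s_n = O(n^{-\alpha})$ for every $\alpha > 0$, i.e., membership in $\Sch_0$ in view of \eqref{eq:sss}.

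For part (i), I would expand the smooth kernel in its double Fourier series on $\bbT \times \bbT$,
$$k(\mu, \nu) = \sum_{j, \ell \in \bbZ} c_{j\ell}\, \mu^j \bar{\nu}^\ell,$$
and note that smoothness of $k$ gives $|c_{j\ell}| \leq C_M (1 + |j| + |\ell|)^{-M}$ for every $M$. Denoting by $K_M$ the integral operator associated with the partial sum over $|j|, |\ell| \leq M$, its range is contained in $\Span\{\mu^j : |j| \leq M\}$, so $\rank K_M \leq 2M+1$. Estimating $\|K - K_M\|$ by the Hilbert--Schmidt norm of the truncated kernel yields $\|K - K_M\| = O(M^{-M'})$ for every $M'$, and the min-max principle then gives $s_{2M+2}(K) = O(M^{-M'})$, whence $K \in \Sch_0$.

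For part (ii), I would exploit the fact that in the basis $\{\mu^k\}_{k \geq 0}$ of $H^2(\bbT)$ the Hankel operator has matrix entries $\langle \Hank(\omega)\mu^k, \mu^j \rangle = \wh\omega(j+k)$, so that only the non-negative Fourier coefficients of $\omega$ contribute. One then has the norm-convergent expansion $\Hank(\omega) = \sum_{n \geq 0} \wh\omega(n) \Hank(\mu^n)$, where $\Hank(\mu^n)$ acts by $\mu^k \mapsto \mu^{n-k}$ for $0 \leq k \leq n$ and by zero otherwise; in particular $\Hank(\mu^n)$ is a partial isometry of norm one whose range lies in $\Span\{\mu^0, \ldots, \mu^n\}$. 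The truncation $\sum_{n=0}^{N-1} \wh\omega(n) \Hank(\mu^n)$ therefore has rank $\leq N$, while the tail has operator norm at most $\sum_{n \geq N} |\wh\omega(n)|$, which is $O(N^{-M})$ for every $M$ by smoothness of $\omega$. The min-max principle again gives $\Hank(\omega) \in \Sch_0$.

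Both arguments are elementary and I do not anticipate any serious obstacle: the only bookkeeping point is to convert bounds of the form $s_{cN}(\cdot) = O(N^{-M})$ for every $M$ into $s_n(\cdot) = O(n^{-\alpha})$ for every $\alpha > 0$, which is immediate. Alternatively, one could deduce (ii) from (i) by realizing $\Hank(\omega)$ as an integral operator with a smooth kernel after a suitable identification, but the direct matrix approach above is more transparent.
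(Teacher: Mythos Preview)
Your proof is correct. For part (i) your argument is essentially the paper's: approximation of the kernel by trigonometric polynomials, yielding fast approximation of $K$ by finite rank operators.

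For part (ii) you take a genuinely different route. The paper deduces (ii) from (i) via a commutator argument: writing $P_+\omega\inv P_+ = \omega P_+\inv P_+ + [P_+,\omega]\inv P_+$, it observes that $P_+\inv P_+$ has rank one, while $[P_+,\omega]$ is an integral operator with the smooth kernel $\frac{\omega(\mu')-\omega(\mu)}{\mu'-\mu}\mu'$, so part (i) applies. Your approach instead works directly in the matrix picture, truncating the expansion $\Hank(\omega)=\sum_{n\ge0}\wh\omega(n)\Hank(\mu^n)$ and bounding the tail in operator norm by $\sum_{n\ge N}|\wh\omega(n)|$. Your argument is more elementary and self-contained; the paper's commutator trick, on the other hand, is exactly the mechanism reused in the next lemma (separation of disjoint singular supports), so it also serves as a warm-up for what follows. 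Both are perfectly valid and short.
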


\begin{proof}
Part (i) is a classical fact; it can be obtained, for example, by approximating the integral 
kernel of $K$ by trigonometric polynomials. This yields a fast approximation of $K$ by 
finite rank operators. 

Part (ii) is also well-known; let us show that it follows from part (i). 
It will be convenient to consider the projection $P_+$ here as an operator
acting from $L^2(\bbT)$ to $L^2(\bbT)$ (rather than from $L^2 (\bbT)$ to $H^2 (\bbT)$). 
Recall that $P_+$ acts according to the formula
\begin{equation}
 (P_{+} f)(\mu)= \lim_{\epsilon\to+0}\int_{\bbT} 
\frac{f(\mu')}{\mu'- (1-\epsilon)\mu}\mu' dm(\mu'),
\label{eq:di}
\end{equation}
and that $\inv$ is the involution $(\inv  f)(\mu)=f(\bar{\mu})$. 
We have to prove that the operator $P_+\omega \inv P_+$ in $L^2(\bbT)$ belongs to 
the class $\Sch_0$. 
Since $P_+ \inv P_+$ is a rank one operator (projection onto constants), it suffices to check that 
\begin{equation}
P_+\omega \inv P_+-\omega P_+ \inv P_+ = [P_+,\omega] \inv P_+\in\Sch_0.
\label{eq:did}
\end{equation} 
It follows from \eqref{eq:di} that the commutator $[P_+,\omega]$  is an 
integral operator in $L^2(\bbT)$ with the kernel
$$
\frac{\omega(\mu')-\omega(\mu)}{\mu'-\mu} \mu', \quad \mu,\mu'\in\bbT.
$$
This is a $C^\infty$ function, and so $[P_+,\omega]\in \Sch_0$ which implies   \eqref{eq:did}.
\end{proof}

The following assertion allows us to separate the contributions of different singularities of the symbol. 
Essentially, this is a very well known argument, see, e.g. \cite{Power}.

\begin{lemma}\label{disjoint}
Let $\omega_1,\omega_2\in L^\infty(\bbT)$ be such that 
$\sing\supp \omega_1\cap\sing\supp\omega_2=\varnothing$. 
Then 
$$
\Hank(\omega_1)^*\Hank(\omega_2)\in\Sch_0, 
\quad
\Hank(\omega_1)\Hank(\omega_2)^*\in\Sch_0.
$$
\end{lemma}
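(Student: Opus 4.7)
The plan is to combine a smooth partition of unity that separates the two singular supports with the fact that the Riesz projection $P_-$ has a Cauchy-type kernel which is smooth away from the diagonal.

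First, since $X_j:=\sing\supp\omega_j$, $j=1,2$, are disjoint closed subsets of $\bbT$, I would choose real-valued cutoffs $\chi_1,\chi_2\in C^\infty(\bbT)$ with $\chi_j\equiv 1$ on a neighborhood of $X_j$ and $\supp\chi_1\cap\supp\chi_2=\varnothing$. The function $(1-\chi_j)\omega_j$ vanishes near $X_j$ and is smooth on $\bbT\setminus X_j$ as a product of smooth functions, so $(1-\chi_j)\omega_j\in C^\infty(\bbT)$ and $\Hank((1-\chi_j)\omega_j)\in\Sch_0$ by Lemma~\ref{mm}(ii). Expanding both products using $\omega_j=\chi_j\omega_j+(1-\chi_j)\omega_j$ and using that $\Sch_0$ is a two-sided ideal, we reduce to proving the two inclusions with $\omega_j$ replaced by $a_j:=\chi_j\omega_j$. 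By construction $\supp a_1\cap\supp a_2=\varnothing$, and in particular $\bar a_1a_2\equiv 0$ and $a_1\bar a_2\equiv 0$.

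Second, using $\Hank(\omega)f=P_+(\omega\,Wf)$ and, taking adjoints, $\Hank(\omega)^*g=P_+W(\bar\omega g)$ on $H^2(\bbT)$, together with $I=P_++P_-$ on $L^2(\bbT)$ and the vanishing $\bar a_1a_2=0$, a direct computation gives $\Hank(a_1)^*\Hank(a_2)=-P_+W M_{\bar a_1}P_-M_{a_2}W P_+$, where $M_\varphi$ denotes multiplication by $\varphi$. For the other product, one uses $W P_+W=P_-+P_0$ (with $P_0$ the rank-one projection onto the constants) and $a_1\bar a_2=0$ to obtain, modulo a single rank-one term coming from $P_0$, the congruence $\Hank(a_1)\Hank(a_2)^*\equiv P_+M_{a_1}P_-M_{\bar a_2}P_+$ mod $\Sch_0$. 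It thus suffices to show that the sandwich operators $M_{\bar a_1}P_-M_{a_2}$ and $M_{a_1}P_-M_{\bar a_2}$ on $L^2(\bbT)$ belong to $\Sch_0$.

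Third, choose additional cutoffs $\tilde\chi_j\in C^\infty(\bbT)$ with $\tilde\chi_j\equiv 1$ on $\supp\chi_j$ and $\supp\tilde\chi_1\cap\supp\tilde\chi_2=\varnothing$. Since $\tilde\chi_ja_j=a_j$, each sandwich factors, e.g., as $M_{\bar a_1}P_-M_{a_2}=M_{\bar a_1}(M_{\tilde\chi_1}P_-M_{\tilde\chi_2})M_{a_2}$. The central operator $M_{\tilde\chi_1}P_-M_{\tilde\chi_2}$ is an integral operator on $L^2(\bbT)$ whose kernel equals $\tilde\chi_1(\mu)K_{P_-}(\mu,\mu')\tilde\chi_2(\mu')$; because the smooth cutoffs have disjoint supports, this product is supported away from the diagonal $\mu=\mu'$ (where $K_{P_-}$ has its only singularity) and therefore lies in $C^\infty(\bbT\times\bbT)$. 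Lemma~\ref{mm}(i) then gives $M_{\tilde\chi_1}P_-M_{\tilde\chi_2}\in\Sch_0$, and the ideal property finishes the proof.

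The main (modest) technical point is to arrange the algebraic identities for $\Hank^*\Hank$ and $\Hank\Hank^*$ in parallel so that they both reduce to a sandwich of the form $M_\bullet P_-M_\bullet$; in the $\Hank\Hank^*$ case this requires tracking the rank-one correction $P_0$ coming from $WP_+W=P_-+P_0$, after which the cutoff insertion and the application of Lemma~\ref{mm} are routine.
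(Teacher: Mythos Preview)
Your argument is correct and follows essentially the same route as the paper: reduce via smooth cutoffs to symbols with disjoint supports, then observe that sandwiching the Riesz projection between two smooth functions with disjoint supports yields an integral operator with $C^\infty$ kernel, hence in $\Sch_0$ by Lemma~\ref{mm}(i). The paper's execution is slightly more economical: it places the real cutoffs $\varkappa_j$ directly adjacent to $P_+$ (so no $I=P_++P_-$ splitting or auxiliary $\tilde\chi_j$ layer is needed), and it dispatches the second inclusion $H(\omega_1)H(\omega_2)^*\in\Sch_0$ in one line via the identity $H(\omega)^*=H(\omega_*)$ with $\omega_*(\mu)=\overline{\omega(\bar\mu)}$, rather than tracking $WP_+W=P_-+P_0$.
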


\begin{proof}
Let $\varkappa_{1}$, $\varkappa_{2}$ be real functions in $C^\infty(\bbT)$ 
with disjoint supports such that 
$$
 (1-\varkappa_k) \omega_k  \in C^\infty(\bbT), 
\quad k=1,2.
$$
By Lemma~\ref{mm}(ii), we have
$$
\Hank((1-\varkappa_k) \omega_k)\in \Sch_0,
$$
and hence it suffices to show that
\begin{equation}
\Hank(\varkappa_{1} \omega_1)^*\Hank(\varkappa_2 \omega_2) \in \Sch_0,
\quad
\Hank(\varkappa_{1} \omega_1)\Hank(\varkappa_2 \omega_2)^* \in \Sch_0.
\label{eq:di1S}\end{equation}
It follows from definition \eqref{eq:HH} that
$$
\Hank(\varkappa_{1} \omega_1)^*\Hank(\varkappa_2 \omega_2) f
=
P_+ \inv  {\omega_1} ( {\varkappa_1} P_+ \varkappa_2) \omega_2 \inv f , \quad f\in H^2({\bbT}).
$$
Since the supports of $\varkappa_1$ and $\varkappa_2$ are disjoint, 
the operator $ \varkappa_1 P_+\varkappa_2$
has a $C^\infty$ smooth integral kernel
$$
\frac{ {\varkappa_1(\mu)}\varkappa_2(\mu')}{\mu'-\mu  } \mu' ,
\quad \mu,\mu'\in \bbT,
$$
and so by Lemma~\ref{mm}(i) it belongs to the class $\Sch_0$. This ensures the first 
inclusion in  \eqref{eq:di1S}. 
In view of the obvious identity 
$$
H(\omega)^* = H(\omega_{*})
\quad   \text{where}\quad  \omega_{*} (\mu) =\overline{\omega(\bar{\mu})},
$$
the second inclusion  \eqref{eq:di1S} follows from the first one.
\end{proof}

\begin{proof}[Proof of Theorem~\ref{LOCAL}]
Let us apply the abstract Theorem~\ref{lma.c1} to the Hankel operators
$A_{\ell}= H(\omega_{\ell})$, $ \ell=1,\ldots, L$. 
Lemma~\ref{disjoint} implies that the asymptotic orthogonality condition \eqref{c1} is satisfied. 
Therefore the asymptotic relations  \eqref{eq:c2aL}  and \eqref{eq:c2bL} 
follow directly from \eqref{c2a}  and \eqref{c2b}.
\end{proof}

\subsection{Hankel operators in $H^2_+(\bbR)$}
Hankel operators can also be defined  in the Hardy space $H^2_+(\bbR)$ of functions analytic in the upper half-plane.
We denote by $\Phi$ the unitary Fourier transform on $L^2(\bbR)$, 
$$
\wh u(t)=(\Phi u)(t)=\frac1{\sqrt{2\pi}} \int_{-\infty}^\infty u(x) e^{-ixt}dx.
$$
Let $H^2_+(\bbR)\subset L^2(\bbR)$ be the  Hardy class, 
$$
H^2_+(\bbR)=\{u\in L^2 (\bbR): \wh u(t)=0 \text{ for $t<0$}\},
$$
and let $\bP_+:L^2(\bbR)\to H^2_+(\bbR)$ be the corresponding orthogonal projection. 
Let   $\binv$   be the involution in $L^2(\bbR)$, $({\binv}f)(x)=f(-x)$. 
For $\bomega\in L^\infty(\bbR)$,  the  operator 
$\bHank({\bomega})$ in $H^2_+(\bbR)$
is defined by the formula
\begin{equation}
\bHank({\bomega})f=\bP_+({\bomega}{\binv}  f), 
\quad
f\in H^2_+(\bbR).
\label{e3}
\end{equation}

There is a unitary equivalence between the  Hankel operators 
$\Hank(\omega)$ defined in $H^2(\bbT)$ by formula \eqref{eq:HH} 
and the Hankel operators $\bHank({\bomega})$ defined in $H^2_+(\bbR)$ by formula \eqref{e3}. 
Indeed, let
\[
w=\frac{z-i/2}{z+i/2}, \quad z=\frac{i}{2}\frac{1+w}{1-w}
\label{b8a}
\]
be the  standard conformal map sending the upper half-plane
 onto the unit disc, and let  $\calU: H^2(\bbT)\to H^2_+ (\bbR)$ be
 the corresponding unitary operator defined by
$$
(\calU f)(x)
=
\tfrac1{\sqrt{2\pi}}\tfrac1{x+i/2} f(\tfrac{x-i/2}{x+i/2}), 
\quad
(\calU^* \mathbf f)(\mu)
=
i\sqrt{2\pi}\tfrac1{1-\mu}\mathbf  f (\tfrac{i}{2}\tfrac{1+\mu}{1-\mu}).
$$
Then
\begin{equation}
\calU \Hank(\omega) \calU^*=\bHank({\bomega}), 
\quad\text{ if }\quad
{\bomega} (x)= -\tfrac{x-i/2}{x+i/2}\omega (\tfrac{x-i/2}{x+i/2}). 
\label{eq:HG}
\end{equation}
So the localization principle stated for $H(\omega)$ can be automatically 
mapped to operators $\bHank({\bomega})$. This is discussed below. 

\subsection{Localization principle in $H^2_+(\bbR)$}

Symbols $\bomega (x)$ of Hankel operators $\bHank(\bomega)$
have the exceptional points $x=+\infty$ and $x=-\infty$;  it will be convenient 
to identify these two points.  
The real line with such identification will be denoted $\bbR_{*}$. 
We write $\bomega \in C (\bbR_{*})$ if $\bomega \in C (\bbR )$ and 
$$
\lim_{x\to \infty}\bomega (x)=\lim_{x\to - \infty}\bomega(x),
$$
where both limits are supposed to exist. 
Similarly, we write $\bomega\in C^\infty (\bbR_{*})$ if 
$\bomega\in C^\infty (\bbR )$ and, for all $m=0,1,\ldots$, 
\begin{equation}
\lim_{x\to \infty}\bomega^{(m)} (x)=\lim_{x\to - \infty}\bomega^{(m)}(x).
\label{eq:id1}
\end{equation}
In particular, the point $x=\infty$ belongs to the singular support of $\bomega$ if 
for at least one $m\geq0$ the relation \eqref{eq:id1} fails
(i.e. if either  at least  one of the limits does not exist or   the limits are not equal).

Let us state the localization principle for Hankel operators in $H^2_+(\bbR)$. 

\begin{theorem}\label{LOCALC}
Let $\bomega_{\ell}\in L^\infty (\bbR)$, $\ell=1,\ldots, L<\infty$, be such that
$$
\sing\supp\bomega_{\ell} \cap \sing\supp\bomega_j =\varnothing, \quad \ell\neq j.
$$
Set $\bomega=\bomega_{1}+\dots+\bomega_L$.
Then for all $p >0$ we have the relations
\begin{align*}
\limsup_{n\to\infty}n s_n(\bHank( {\bomega}))^{p}
&\leq
\sum_{\ell=1}^L
\limsup_{n\to\infty} n s_n (\bHank( {\bomega}_\ell))^{p},
\\
\liminf_{n\to\infty}n s_n(\bHank( {\bomega} ))^{p}
&\geq
\sum_{\ell=1}^L
\liminf_{n\to\infty} n s_n(\bHank( {\bomega}_{\ell}))^{p}.
\end{align*}
\end{theorem}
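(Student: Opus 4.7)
My plan is to deduce Theorem~\ref{LOCALC} from Theorem~\ref{LOCAL} by means of the unitary equivalence \eqref{eq:HG} between Hankel operators on $H^2(\bbT)$ and those on $H^2_+(\bbR)$, pulling the problem back to the unit circle. For each $\ell$ I would introduce the symbol $\omega_\ell \in L^\infty(\bbT)$ defined via the inverse conformal map by
\begin{equation*}
\omega_\ell(\mu) = -\bar\mu\, \bomega_\ell\!\left(\tfrac{i}{2}\tfrac{1+\mu}{1-\mu}\right), \quad \mu \in \bbT \setminus \{1\}.
\end{equation*}
By \eqref{eq:HG} the operators $\Hank(\omega_\ell)$ and $\bHank(\bomega_\ell)$ are unitarily equivalent through $\calU$, whence $s_n(\Hank(\omega_\ell)) = s_n(\bHank(\bomega_\ell))$ for every $n$; by linearity, the sum $\omega = \omega_1 + \dots + \omega_L$ on $\bbT$ corresponds to $\bomega = \bomega_1 + \dots + \bomega_L$ on $\bbR$, and consequently $s_n(\Hank(\omega)) = s_n(\bHank(\bomega))$ as well.

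The next step is to verify that the disjointness hypothesis transfers from $\bbR_*$ to $\bbT$, namely $\sing\supp \omega_\ell \cap \sing\supp \omega_j = \varnothing$ for $\ell \neq j$. The map $x \mapsto \mu(x) = (x-i/2)/(x+i/2)$ is a real-analytic diffeomorphism of $\bbR$ onto $\bbT \setminus \{1\}$, and the factor $-\bar\mu$ is smooth on all of $\bbT$; hence for any finite $x_0$, smoothness of $\omega_\ell$ at $\mu(x_0)$ is equivalent to smoothness of $\bomega_\ell$ at $x_0$, so the singular supports at finite points correspond bijectively. At the identified point $\infty \leftrightarrow 1$, the matching is built into the definition of $C^\infty(\bbR_*)$: the limit conditions \eqref{eq:id1} correspond, through the chain rule applied to the Möbius map, to smoothness of the pullback $\omega_\ell$ at $\mu = 1$.

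I would then apply Theorem~\ref{LOCAL} to the symbols $\omega_1, \dots, \omega_L$ on $\bbT$, producing the upper and lower asymptotic inequalities for $n s_n(\Hank(\omega))^p$. Substituting the equalities $s_n(\Hank(\omega_\ell)) = s_n(\bHank(\bomega_\ell))$ and $s_n(\Hank(\omega)) = s_n(\bHank(\bomega))$ from the first step then yields the two inequalities claimed in Theorem~\ref{LOCALC}.

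The principal obstacle I anticipate is the verification of the singular-support correspondence at the identified point $x = \infty \leftrightarrow \mu = 1$. Bijectivity at finite points is immediate from the diffeomorphism, but checking that the condition \eqref{eq:id1} precisely characterizes smoothness of $\omega_\ell$ at $\mu = 1$ on $\bbT$ requires a careful tracking of how successive derivatives of $\bomega_\ell$ at $\pm\infty$ transform under the chain rule for $x(\mu)$ and $\mu(x)$; this is the step where one must be most cautious in order to guarantee that the hypothesis of Theorem~\ref{LOCAL} is met.
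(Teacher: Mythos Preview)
Your proposal is correct and follows essentially the same approach as the paper: transport the problem from $H^2_+(\bbR)$ to $H^2(\bbT)$ via the unitary equivalence \eqref{eq:HG}, observe that the conformal map \eqref{b8a} gives a one-to-one correspondence between the singular supports on $\bbR_*$ and on $\bbT$, and then invoke Theorem~\ref{LOCAL}. The paper states the singular-support correspondence (including the point $\infty\leftrightarrow 1$) as an observation without further detail, whereas you spell out more carefully why the definition of $C^\infty(\bbR_*)$ via \eqref{eq:id1} matches smoothness at $\mu=1$; this extra care is appropriate but does not constitute a different method.
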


 Observe that formulas  \eqref{b8a} establish a one-to-one correspondence 
 between the unit circle $\bbT$ and the real axis $\bbR_{*}$ with the points $x=+\infty$ and $x=-\infty$ identified. 
 They yield also the one-to-one correspondence  
 between the singular supports of the symbols $\omega(\mu)$ and ${\bomega}(x)$ linked by equality  \eqref{eq:HG}. 
 Thus, Theorem~\ref{LOCALC} is a direct consequence of Theorem~\ref{LOCAL}.

\section{Applications of localization principle: discrete case}\label{sec.b}
 
\subsection{Discrete representation}\label{sec.a3}
For a   sequence $\{h(j)\}_{j=0}^\infty$     of complex numbers,
the Hankel operator $\Gamma (h)$ in the space $ \ell^2(\bbZ_+)$ is formally defined by the 
``infinite matrix'' $\{h(j+k)\}_{j,k=0}^\infty$:
\begin{equation}
(\Gamma (h) u) (j)= \sum_{k=0}^\infty h(j+k) u (k), \quad u= \{u(k)\}_{k=0}^\infty.
\label{eq:a5}
\end{equation}
The Hankel operators $\Gamma(h)$ in $\ell^2(\bbZ_+) $ and $\Hank(\omega)$ in $H^2(\bbT)$ are
related as follows. 
Let
$$
\calF: f\mapsto \{\wh f(j)\}_{j=0}^\infty,
\quad
\calF: H^2(\bbT) \to \ell^2 ({\bbZ}_{+}), 
 $$
be the discrete Fourier transform. 
Then the matrix elements of $H (\omega)$ in the orthonormal basis 
$\{\mu^j\}_{j=0}^\infty$ are
$$
(H(\omega) \mu^j, \mu^k)_{L^2(\bbT)}
=
\wh\omega(j+k), \quad j,k\geq0,
$$
so that 
\begin{equation}
\Gank(h) 
= 
{\mathcal F}H(\omega) {\mathcal F}^* \quad {\rm if} \quad  \wh \omega(j)
=
h(j), \quad j\geq0.
\label{b1}
\end{equation}
Since \eqref{b1} involves only the coefficients with $j\geq0$, for a given sequence $h$ the 
symbol  $\omega$ is not uniquely defined. 

\subsection{Plan of the approach} 

In our previous publication \cite{II} we considered compact \emph{self-adjoint} Hankel operators,
corresponding to sequences of real numbers of the type
\[
q(j)= j^{-1}(\log j)^{-\alpha}+ \text{error term}, \quad j\to\infty,
\label{z0a}
\]
where $\alpha>0$.  
Under the appropriate assumptions on the error term, we proved in \cite{II} that 
the positive   eigenvalues of the Hankel operator $\Gamma(q)$ have the asymptotics
$$
\lambda_n^{+}(\Gamma(q))= v(\alpha) n^{-\alpha} +o(n^{-\alpha}), 
\quad n\to\infty,
$$
where the coefficient $v(\alpha)$ is defined in \eqref{a7}. 
For negative eigenvalues, we have $\lambda_n^{-}(\Gamma(q))= o(n^{-\alpha})$ as $n\to\infty$.

In \cite{II} our analysis was based on the 
asymptotic form \eqref{z0a} and did not involve symbols directly.
In this paper, we check (this is an easy calculation, see Lemma~\ref{locs} below) that if $q(j)= j^{-1}(\log j)^{-\alpha}$, then a symbol $\sigma$ of $\Gank(q)$ can be chosen such that 
$\sing\supp \sigma=\{1\}$. 

Theorem~\ref{LOCAL} allows us to find the asymptotics of singular values 
for more general ``oscillating'' sequences of the type 
\[
h(j)=\sum_{\ell=1}^L b_\ell j^{-1}(\log j)^{-\alpha}\zeta_\ell^{-j} + \text{error term}, \quad j\to\infty,
\label{z9}
\]
where  
$\zeta_1,\dots,\zeta_L\in\bbT$ are distinct points and 
$b_1,\dots,b_L\in\bbC$ are arbitrary coefficients. 
It is easy to see  that the symbol corresponding to the $\ell$'th term in \eqref{z9} equals
$b_\ell \sigma (\mu/\zeta_\ell)$.
Hence its singular support  consists of one point $\{ \zeta_\ell\}$,
and so we are in the situation described by the localization principle for  $p=1/\alpha$.
The error term in  \eqref{z9} is treated by using the estimates from \cite{I} on singular values of Hankel operators.

Notice that the operators $\Gamma (h)$ corresponding to sequences $h$ of the class \eqref{z9} 
are in general not self-adjoint. 
We have information about the asymptotics of their singular values, but not of 
their eigenvalues.

\subsection{Main result in the discrete case}

In order to state our requirements 
 on the error term in \eqref{z9},  we need some notation.
Let   
\begin{equation}
M(\alpha)=
\begin{cases}
[\alpha]+1, & \text{ if } \alpha\geq1/2,
\\
0, & \text{ if } \alpha<1/2,
\end{cases}
\label{a3}
\end{equation}
where $[\alpha]$ is the integer part of $\alpha$. 
For a sequence $h=\{h(j)\}_{j=0}^\infty$, we define iteratively the sequences
$h^{(m)}=\{h^{(m)}(j)\}_{j=0}^\infty$, $m=0,1,2,\dots$, by setting $h^{(0)}(j)=h(j)$  
and 
\begin{equation}
h^{(m+1)}(j)=h^{(m)}(j+1)-h^{(m)}(j), 
\quad j\geq0.
\label{eq:It}\end{equation}
Note that if $h(j)=j^{-1}(\log j)^{-\alpha}$ for sufficiently large $j$, then for all $m\geq1$ the sequences $h^{(m)}$ satisfy 
\begin{equation}
h^{(m)}(j)=O(j^{-1-m}(\log j)^{-\alpha}), \quad j\to\infty.
\label{eq:It1}\end{equation}

Now we are in a position to state precisely our result on Hankel operators with matrix elements \eqref{z9}.

\begin{theorem}\label{thm.a4}
Let $\alpha>0$, let $\zeta_1, \dots,\zeta_L\in\bbT$ be distinct numbers, 
and let $b_{1},\dots,b_L\in\bbC$.
Let $h$ be a sequence of complex numbers such that
\[
h(j)=\sum_{\ell=1}^L \bigl(b_\ell j^{-1}(\log j)^{-\alpha}+ g_\ell(j)\bigr)\zeta_\ell^{-j},
\quad
j\geq2,
\label{a13}
\]
where the error terms $g_\ell$, $\ell=1,\ldots, L$, satisfy the estimates
\begin{equation}
g_\ell^{(m)}(j)
=o(j^{-1-m}(\log j)^{-\alpha}), 
\quad j\to\infty, 
\label{a5a}
\end{equation}
for all   $m=0,1,\dots,M(\alpha)$
$( M(\alpha)$ is given by \eqref{a3}$)$.
Then the singular values of the Hankel
  operator $\Gamma (h)$ defined in $\ell^2(\bbZ_+)$   by formula \eqref{eq:a5}  satisfy the asymptotic relation
  \[
s_n(\Gamma (h))= c \, n^{-\alpha}+o(n^{-\alpha}), 
\quad n\to\infty,
\label{z0c}
\]
where
\[
c=v(\alpha)\Big(\sum_{\ell=1}^L \abs{b_\ell}^{1/\alpha}\Big)^\alpha 
\label{a14}
\]
and the coefficient 
$v(\alpha)$ is given by formula \eqref{a7}.
\end{theorem}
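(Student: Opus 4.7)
The plan is to split $h$ according to the $L$ oscillating frequencies, realize each piece as a Hankel operator on $H^2(\bbT)$ whose symbol is singular only at $\zeta_\ell$, and then apply the localization principle (Theorem~\ref{LOCAL}) to reduce the problem to a single-point analysis, which in turn is reduced to the self-adjoint power-logarithmic case already treated in \cite{II}.

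Concretely, I write $h = h_1 + \dots + h_L$ with $h_\ell(j) = (b_\ell j^{-1}(\log j)^{-\alpha} + g_\ell(j))\zeta_\ell^{-j}$, so $\Gamma(h) = \sum_\ell \Gamma(h_\ell)$. Via the Fourier identity \eqref{b1}, I realize each $\Gamma(h_\ell)$ as $\calF H(\omega_\ell)\calF^*$ by taking $\omega_\ell$ to be the analytic symbol $\sum_{j\geq 0}h_\ell(j)\mu^j$. An elementary Tauberian computation shows that the only boundary singularity of this series on $\bbT$ is at $\mu = \zeta_\ell$, so $\sing\supp\omega_\ell \subseteq \{\zeta_\ell\}$, and the disjointness hypothesis \eqref{eq:singsupp} of Theorem~\ref{LOCAL} holds for $\omega = \omega_1 + \dots + \omega_L$. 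Applying \eqref{c2aa} with $p = 1/\alpha$ reduces matters to proving, for each $\ell$ separately, that
$$
\lim_{n\to\infty} n\, s_n(\Gamma(h_\ell))^{1/\alpha} = \abs{b_\ell}^{1/\alpha}\, v(\alpha)^{1/\alpha}.
$$

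The single-term analysis proceeds by conjugation with the diagonal unitary $D_\ell$ on $\ell^2(\bbZ_+)$ defined by $(D_\ell u)(j) = \zeta_\ell^{-j} u(j)$; a direct matrix computation gives $D_\ell \Gamma(h_\ell) D_\ell = \Gamma(b_\ell q + g_\ell)$ with $q(j) = j^{-1}(\log j)^{-\alpha}$. The principal piece $b_\ell \Gamma(q)$ is a scalar multiple of a self-adjoint Hankel operator, and its singular values obey $s_n = \abs{b_\ell} v(\alpha) n^{-\alpha} + o(n^{-\alpha})$ by the positive eigenvalue asymptotics of $\Gamma(q)$ from \cite{II} (the negative eigenvalues being $o(n^{-\alpha})$). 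The error piece $\Gamma(g_\ell)$ is shown to lie in $\Sch_{1/\alpha,\infty}^0$, after which Lemma~\ref{lma.a2} absorbs it into the main term. Raising the resulting single-term limits to the $\alpha$-th power in the localization identity produces the claimed formulas \eqref{z0c}, \eqref{a14}.

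The main technical obstacle I anticipate is verifying that $\Gamma(g_\ell) \in \Sch_{1/\alpha,\infty}^0$ under the finite-difference hypotheses \eqref{a5a}. The threshold $M(\alpha)$ from \eqref{a3} is tailored to the Peller-type bounds for singular values of Hankel operators developed in \cite{I}: each iterated difference $g_\ell^{(m)}$ corresponds to an additional degree of smoothness of the associated symbol near its singular point, and $M(\alpha)+1$ is the smallest number of differences that converts the pointwise decay $o(j^{-1-m}(\log j)^{-\alpha})$ into the $o(n^{-\alpha})$ singular value bound. The jump at $\alpha = 1/2$ in the definition of $M(\alpha)$ reflects the boundary of applicability of the standard Hankel estimates in the weak Schatten class $\Sch_{2,\infty}$, and the book-keeping needed to handle it cleanly is what makes the statement of \eqref{a3} non-trivial.
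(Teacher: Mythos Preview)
Your overall strategy---rotate each piece to a single singular point, invoke the localization principle, and reduce to the model operator $\Gamma(q)$ of \cite{II} plus an $\Sch^0_{1/\alpha,\infty}$ remainder handled by \cite{I} and Lemma~\ref{lma.a2}---is exactly the paper's. But your particular choice of symbol creates two real gaps.

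First, the analytic symbol $\omega_\ell(\mu)=\sum_{j\ge 0}h_\ell(j)\mu^j$ is \emph{not} in $L^\infty(\bbT)$ when $\alpha\le 1$: already for the model part one has $\sum_{j\ge 2} j^{-1}(\log j)^{-\alpha}(\mu/\zeta_\ell)^j$, and at $\mu=\zeta_\ell$ this is the divergent series $\sum j^{-1}(\log j)^{-\alpha}$. So the hypothesis of Theorem~\ref{LOCAL} (bounded symbols) fails, and your ``Tauberian computation'' cannot produce a function in $L^\infty$. The paper avoids this by taking the \emph{odd} extension of $q$, i.e.\ $\sigma(\mu)=\sum_{j\ge 2} q(j)(\mu^j-\bar\mu^j)$, and proving (Lemma~\ref{locs}) that $\sigma\in L^\infty(\bbT)\cap C^\infty(\bbT\setminus\{1\})$ for every $\alpha>0$; the Remark following that lemma explicitly notes that the zero extension works only for $\alpha>1$.

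Second, bundling the error $g_\ell$ into the symbol is problematic for the singular-support condition. The assumption~\eqref{a5a} controls only $M(\alpha)+1$ iterated differences of $g_\ell$; for $\alpha<1/2$ this means no differences at all. With so little information there is no reason for the associated symbol to be $C^\infty$ (or even bounded) away from $\zeta_\ell$, so you cannot conclude $\sing\supp\omega_\ell\subset\{\zeta_\ell\}$, and the disjointness hypothesis~\eqref{eq:singsupp} may fail. The paper sidesteps this entirely: it applies Theorem~\ref{LOCAL} only to the explicit model symbols $b_\ell\sigma(\mu/\zeta_\ell)$ (which are $C^\infty$ off $\zeta_\ell$ to all orders, since $q$ is explicit), and treats the full error $g(j)=\sum_\ell \zeta_\ell^{-j} g_\ell(j)$ directly in $\ell^2(\bbZ_+)$ via Theorem~\ref{thm.z1} and the rotation Lemma~\ref{zeta}(ii), never passing to a symbol for it.

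The fix is therefore: use the odd-extension symbol $\sigma$ for the model part, apply localization only to $\omega_\natural=\sum_\ell b_\ell\sigma(\mu/\zeta_\ell)$, and absorb $\Gamma(g)$ at the end by Lemma~\ref{lma.a2}. After these corrections your argument coincides with the paper's.
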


This result means that asymptotically the singular value counting function of the operator $\Gamma (h)$ is the sum of such functions for every term in the right-hand side of \eqref{a13}.

\section{Proof of Theorem~\ref{thm.a4}}\label{sec.a}

\subsection{Singular value estimates and asymptotics}
We need two results obtained in our papers \cite{I, II}.
Let $M(\alpha)$ be as in \eqref{a3}.

\begin{theorem}\cite[Theorem~2.3]{I}\label{thm.z1}
Suppose that a sequence $g $ satisfies 
\begin{equation}
g^{(m)}(j)
=o(j^{-1-m}(\log j)^{-\alpha}), 
\quad j\to\infty, 
\label{a5b}
\end{equation}
for some $\alpha>0$
and for all $m=0,1,\dots,M(\alpha)$. Then  
\[
s_n(\Gank(g))=o(n^{-\alpha}), \quad n\to\infty.
\label{a5c}
\]
\end{theorem}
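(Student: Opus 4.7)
The plan is to translate the finite-difference hypothesis on $g$ into a regularity condition on a symbol of $\Gank(g)$, deduce the singular value decay from Peller-type bounds, and then upgrade $O$ to $o$ via a splitting argument.

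First, I would pass to the symbol picture via \eqref{b1}: $\Gank(g)$ is unitarily equivalent to $H(\omega)$ for any $\omega$ on $\bbT$ whose non-negatively indexed Fourier coefficients agree with $g$. A convenient choice is the one-sided generating function $\omega(\mu) = \sum_{j \geq 0} g(j) \mu^j$, patterned after the model case $g(j) = j^{-1}(\log j)^{-\alpha}$ in which $\omega$ has a single logarithmic singularity at $\mu = 1$. The key algebraic observation is that iterated finite differences $g^{(m)}$ are, up to signed binomial combinations, the non-negative Fourier coefficients of the symbol $(\bar\mu - 1)^m \omega(\mu)$. Thus the hypothesis on $g^{(m)}$ encodes the order and rate of the singularity of $\omega$ at $\mu = 1$ and forces $\sing\supp\omega \subseteq \{1\}$.

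Second, I would establish the $O$-version of the bound: if $g^{(m)}(j) = O(j^{-1-m}(\log j)^{-\alpha})$ for all $m \leq M(\alpha)$, then $s_n(\Gank(g)) = O(n^{-\alpha})$, i.e.\ $\Gank(g) \in \Sch_{1/\alpha, \infty}$. For this I would use a dyadic Littlewood--Paley-type decomposition of $\omega$ concentrated around $\mu = 1$: on the dyadic scale $|\mu - 1| \sim 2^{-k}$, the hypothesis on $g^{(M(\alpha))}$, after integration by parts against the kernel $(\bar\mu - 1)^{M(\alpha)}$, controls the size of a smoothed block of $\omega$ in the appropriate norm. Summing the dyadic contributions and applying Peller's $\Sch_{p,\infty}$-characterization of Hankel operators (with $p = 1/\alpha$) delivers the desired weak-Schatten bound. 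The cutoff $M(\alpha) = [\alpha]+1$ for $\alpha \geq 1/2$ is exactly the number of derivatives needed so that this integration by parts yields a summable series in the Besov--Lorentz norm; for $\alpha < 1/2$, the symbol already sits in an $L^{1/\alpha}$-based Besov class directly, and no finite differences beyond $m=0$ are required.

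Third, I would upgrade $O$ to $o$ by an $\eps$-splitting argument. Given $\eps > 0$, choose $N = N(\eps)$ so that $|g^{(m)}(j)| \leq \eps\, j^{-1-m}(\log j)^{-\alpha}$ for all $j \geq N$ and all $m \leq M(\alpha)$. Smoothly truncate $g = g_{<N} + g_{\geq N}$ via a cutoff $\eta(j/N)$. Then $\Gank(g_{<N})$ has rank $O(N)$, so its singular values vanish for $n \gg N$; meanwhile $g_{\geq N}$ satisfies the $O$-hypothesis with constant proportional to $\eps$, since a discrete Leibniz rule shows that differences of $\eta(j/N)$ contribute $O(N^{-k}) = O(j^{-k})$ factors on the support scale $j \sim N$, compatibly with the required weights. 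Applying the $O$-version of the bound from Step~2 to $g_{\geq N}$ yields $s_n(\Gank(g_{\geq N})) \leq C \eps\, n^{-\alpha}$, and since $\eps > 0$ is arbitrary we conclude $\limsup_{n\to\infty} n^{\alpha} s_n(\Gank(g)) = 0$.

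The main technical obstacle is Step~2: establishing the $O$-estimate with the sharp logarithmic weight. This requires carefully propagating the factor $(\log j)^{-\alpha}$ through the symbol-to-operator correspondence and matching it against the singular value decay at the exact exponent $1/\alpha$, which is delicate because the logarithmic correction sits precisely at the borderline of the relevant Schatten--Lorentz class. The dichotomy at $\alpha = 1/2$ in the definition of $M(\alpha)$ reflects exactly whether the symbol belongs to an $L^{1/\alpha}$-based Besov class directly or requires differentiation to land there.
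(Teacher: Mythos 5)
The paper does not prove this statement at all: Theorem~\ref{thm.z1} is imported verbatim from the authors' earlier work \cite{I} (their Theorem~2.3) and is used here as a black box, so there is no in-paper argument to compare your proposal against. Judged on its own terms, your outline points in a sensible direction (pass to the analytic symbol, use the finite differences to localize dyadic blocks, invoke a weak-Schatten/Besov--Lorentz criterion, then upgrade $O$ to $o$ by an $\eps$-splitting plus a finite-rank cutoff, which is legitimate since a Hankel matrix with kernel supported in $j\leq CN$ has rank $O(N)$). Your Step~3 is essentially the same stability mechanism the paper uses elsewhere (Lemma~\ref{lma.a2} and Ky Fan-type rank perturbation), and it is fine, \emph{provided} the $O$-version of Step~2 comes with a constant proportional to the weighted supremum of the data $\max_{m\le M(\alpha)}\sup_j j^{1+m}(\log j)^{\alpha}\abs{g^{(m)}(j)}$; without that quantitative form the $\eps$-splitting gives nothing, and you never state it.

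The genuine gap is that Step~2, which is the entire content of the theorem, is a program rather than a proof. You assert that summation by parts against $(\bar\mu-1)^{M(\alpha)}$ ``controls the size of a smoothed block of $\omega$ in the appropriate norm,'' but the whole difficulty sits exactly there: one must show that with precisely $M(\alpha)=[\alpha]+1>\alpha$ differences the dyadic block $g\ast V_k$ is concentrated on an arc of length $\sim 2^{-k}$ with tail decay $(2^k\abs{\theta})^{-M}$ strong enough to be $L^{p}$-summable at the critical exponent $p=1/\alpha$ (this is where the condition $M>\alpha$ enters and where $m=0$ suffices for $\alpha<1/2$ via an $L^2$--$L^\infty$ interpolation of blocks), and then that the resulting block norms $2^{k/p}\norm{g\ast V_k}_{L^p}\lesssim k^{-1/p}$ are handled in the \emph{weak} $\ell^{p,\infty}$ sense --- the strong Besov sum diverges logarithmically under the $O$-hypothesis, so quoting a plain Besov membership criterion would fail, and the little-$o$ hypothesis does not rescue the strong sum either. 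None of this bookkeeping, nor the uniform dependence of the bound on the data that Step~3 requires, is carried out; it is exactly the content of \cite[Theorem~2.3]{I}. Two smaller inaccuracies: the deduction ``$\sing\supp\omega\subseteq\{1\}$'' is neither proved by your difference identity alone nor needed for this theorem (it belongs to the localization argument, Lemma~\ref{locs}), and your chosen analytic symbol $\sum_{j\ge0}g(j)\mu^j$ need not be bounded for $\alpha\le1$, so any criterion you invoke must be one stated for the analytic part in the distributional/Besov sense rather than for bounded symbols.
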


In \cite{I}
we also have a result with $O$ instead of $o$ in both 
\eqref{a5b} and  \eqref{a5c}, but we do not use it in this paper. 
Observe that for $\alpha<1/2$ we need only the estimate on $g$, 
whereas for $\alpha\geq1/2$ we also need estimates on the iterated differences 
$g^{(m)}$. 

\begin{theorem}\cite[Theorem~1.1]{II}\label{thm.a1a}
Let $\alpha>0$,  and let the ``model sequence''  $q$  be  defined by 
\[
q(j)=j^{-1}(\log j)^{-\alpha} 
\label{a16}
\]
for all sufficiently large $j$ 
\emph{(\emph{the values ${q}(j)$ for any finite number of $j$  are unimportant})}.
Then 
$$
s_n(\Gamma({q}))= v(\alpha )n^{-\alpha}+o(n^{-\alpha}), 
\quad 
n\to\infty.
$$
\end{theorem}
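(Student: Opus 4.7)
The plan is to reduce the model-sequence problem to an integral Hankel operator on $L^2(\bbR_+)$ whose spectral asymptotics can be computed from the behaviour of its kernel at infinity. First I would construct an explicit symbol $\sigma$ for $q$ by setting $\sigma(\mu)=\sum_{j\geq N} j^{-1}(\log j)^{-\alpha}\mu^j$ with a sufficiently large cutoff $N$; altering finitely many $q(j)$ gives a finite-rank perturbation of $\Gamma(q)$, which is absorbed into the $o(n^{-\alpha})$ error by Lemma~\ref{lma.a2}. A standard Abelian/Karamata analysis then shows that $\sigma\in C^\infty(\bbT\setminus\{1\})$, so $\sing\supp\sigma=\{1\}$, and pins down the precise singular behaviour of $\sigma$ near $\mu=1$.

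Second, using the unitary equivalence \eqref{eq:HG} I would transfer the problem to a Hankel integral operator $\bHank(\bomega)$ on $L^2(\bbR_+)$ whose integral kernel $\bh(t)$ is smooth on $(0,\infty)$ and admits a precise asymptotic expansion $\bh(t)\sim c_\alpha\,t^{-1}(\log t)^{-\alpha}$ as $t\to\infty$; the constant $c_\alpha$ is fixed by the Tauberian correspondence between Fourier coefficients and integral kernel. Introduce the model operator $\bHank_0$ with kernel $c_\alpha\,t^{-1}(\log t)^{-\alpha}\chi(t)$, where $\chi$ is a smooth cutoff supported in $[2,\infty)$. The difference $\bHank(\bomega)-\bHank_0$ has a kernel that is either smooth and compactly supported in $t$, or decays strictly faster than $t^{-1}(\log t)^{-\alpha}$ at infinity; standard singular value estimates for integral Hankel operators of the type underlying Theorem~\ref{thm.z1} then place it in $\Sch_{1/\alpha,\infty}^0$. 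Applying Lemma~\ref{lma.a2} reduces the asymptotics of $s_n(\Gamma(q))$ to those of $s_n(\bHank_0)$.

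Finally, I would compute the spectral asymptotics of $\bHank_0$ by a quasi-classical scaling analysis. After a logarithmic rescaling $t=e^u$, the operator $\bHank_0^*\bHank_0$ is well-approximated on each dyadic scale by an exactly scale-invariant ``homogeneous'' Hankel model whose spectrum is diagonalised by the Mellin transform, producing an explicit spectral density in the Mellin variable. Summing the contributions from distinct scales yields the leading asymptotic density $v(\alpha)^{1/\alpha}\varepsilon^{-1/\alpha}$ for the counting function $n(\varepsilon;\bHank_0)$ as $\varepsilon\to 0$, with the constant $v(\alpha)$ emerging from a Beta-function integral that matches formula \eqref{a7}. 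Inverting the counting-function relation gives $s_n(\Gamma(q))=v(\alpha)n^{-\alpha}+o(n^{-\alpha})$. The main obstacle is this last step: assembling the local scale-invariant computations into a sharp global asymptotic requires controlling the slowly varying logarithmic weight uniformly across scales while preserving the constant, so that the Beta-function factor giving $v(\alpha)$ in \eqref{a7} can be identified precisely rather than merely up to a multiplicative error.
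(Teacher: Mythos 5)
There is a genuine gap here, and it is worth being clear about the status of this statement in the paper: Theorem~\ref{thm.a1a} is not proved in this paper at all, it is imported verbatim from \cite{II} (Theorem~1.1 there), where its proof occupies essentially the whole of that article. Your proposal is therefore an attempt to reprove the main theorem of \cite{II} from scratch, and the decisive part of it is missing. Your first two steps (choice of symbol, transfer to an integral Hankel operator, removal of an error term via Theorem~\ref{thm.z1}/Theorem~\ref{thm.d1} and Lemma~\ref{lma.a2}) are a plausible reduction, but the third step --- the spectral asymptotics of the model operator with the \emph{exact} constant $v(\alpha)$ of \eqref{a7} --- is only described (``quasi-classical scaling analysis'', ``diagonalised by the Mellin transform'', ``summing the contributions from distinct scales''), not carried out, and you yourself flag it as the main obstacle. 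That step is precisely where all the difficulty of \cite{II} lies: one must construct the scale-invariant model, prove that the difference between it and $\bGank(\bq_0)$ or $\bGank(\bq_\infty)$ lies in $\Sch_{1/\alpha,\infty}^0$ (which requires control of derivatives of the kernel up to order $M(\alpha)$, not just leading-order kernel asymptotics), and then extract the Beta-function constant from the explicit spectral density. Without this, the argument establishes nothing beyond what is already contained in the reduction lemmas.

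Two further points in the reduction are off as written. First, with the unitary equivalence \eqref{eq:HG} the singularity of the symbol at $\mu=1$ is sent to $x=\infty$, and by \eqref{e4} this corresponds to the behaviour of the kernel at $t\to 0$, i.e.\ $\bh(t)\sim c\,t^{-1}(\log(1/t))^{-\alpha}$ as $t\to0$ (the analogue of $\bq_0$ in \eqref{eq:qq}), not to a tail $t^{-1}(\log t)^{-\alpha}$ at $t\to\infty$ as you claim; this is repairable, since by Theorem~\ref{thm.a1aC} both model kernels give the same constant, but the reduction must be restated. Second, for $\alpha\le1$ the one-sided symbol $\sum_{j\ge N}j^{-1}(\log j)^{-\alpha}\mu^j$ is unbounded near $\mu=1$ (see the remark following Lemma~\ref{locs}), so \eqref{eq:HG}, which is formulated for $L^\infty$ symbols, does not apply directly; one should use the odd extension as in Lemma~\ref{locs}, or justify the transfer in a larger symbol class.
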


Of course, this result corresponds to a particular case of  
Theorem~\ref{thm.a4}  with $L=1$, $\zeta_1=1$, $b_1=1$.

\subsection{The model symbol}
In order to combine the contributions of different terms in \eqref{a13}, we use 
the localization principle (i.e. Theorem~\ref{LOCAL}). 
To that end, we have to identify the singular support of the symbol corresponding
to the model sequence \eqref{a16}; we suppose that  \eqref{a16} is true for all $j\geq 2$ and put $q(0)=q(1)=0$.
We need to find a function $\sigma$ such that its Fourier coefficients $\wh \sigma(j)=q(j)$ for $j\geq0$. 
Of course, the choice of $\sigma$ is not unique. We will choose $\sigma$ corresponding to the 
\emph{odd} extension of  the sequence $q(j)$ to the negative $j$.

\begin{lemma}\label{locs}
Let  $\alpha\geq 0$, and let ${q}$ be  given by \eqref{a16}; set
\[
\sigma(\mu)
=
\sum_{j=2}^\infty {q}(j)(\mu^j -\overline{\mu}^j), 
\quad \mu\in\bbT.
\label{eq:odd}
\]
Then
$\sigma\in L^\infty (\bbT)$ and $\sigma\in C^\infty (\bbT\setminus \{1\})$.
\end{lemma}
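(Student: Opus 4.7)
The plan is to reduce $\sigma$ to a standard sine series on $\bbT$ and then establish both claims by summation by parts. Since $\bar\mu=\mu^{-1}$ on $\bbT$, setting $\mu=e^{i\theta}$ gives
\[
\sigma(e^{i\theta})
\;=\;
2i\sum_{j=2}^{\infty}q(j)\sin(j\theta),
\qquad
q(j)=j^{-1}(\log j)^{-\alpha},
\]
so the only possible singular point of $\sigma$ is $\mu=1$. The basic tool throughout is the Dirichlet bound $\bigl|\sum_{k=1}^{N}\sin(k\theta)\bigr|\le 1/|\sin(\theta/2)|$, which controls oscillatory partial sums in terms of the distance of $\theta$ to $0\bmod 2\pi$.

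For the boundedness, I would split the series at $N=\lceil 1/|\theta|\rceil$ for $\theta\in(0,\pi]$. On the range $2\le j\le N$, using $|\sin(j\theta)|\le j|\theta|$ together with $\sum_{j\le N}(\log j)^{-\alpha}=O(N(\log N)^{-\alpha})$ gives a contribution of order $(\log(1/|\theta|))^{-\alpha}$. On the complementary range $j>N$, Abel summation, applied with the summable forward difference $|q^{(1)}(j)|=O(j^{-2}(\log j)^{-\alpha})$ and the Dirichlet bound of order $1/|\theta|$, yields a contribution of the same order. Both pieces are uniformly bounded in $\theta$, so $\sigma\in L^{\infty}(\bbT)$ (and in fact $\sigma(e^{i\theta})\to 0$ as $\theta\to 0$ when $\alpha>0$; for $\alpha=0$ a jump at $\mu=1$ is allowed).

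For smoothness on $\bbT\setminus\{1\}$ I fix $\delta>0$ and restrict to $|\theta|\in[\delta,\pi]$. The formal $k$-th derivative of $\sigma(e^{i\theta})$ is $2i\sum_{j}j^{k}q(j)\sin^{(k)}(j\theta)$, and the main obstacle is that for $k\ge 1$ the individual terms no longer tend to zero: indeed $j^{k}q(j)=j^{k-1}(\log j)^{-\alpha}\to\infty$. I would circumvent this by iterating Abel summation: each step trades one factor of $j$ in the coefficient for a forward difference of $q$, and replaces the oscillatory partial sum by an iterated partial sum of $\sin^{(k)}(j\theta)$, which remains uniformly bounded on $|\theta|\ge\delta$. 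After $k+1$ iterations the coefficient becomes $q^{(k+1)}(j)=O(j^{-2-k}(\log j)^{-\alpha})$ by the estimate \eqref{eq:It1}, hence absolutely summable, while the boundary terms vanish since $j^{k-m}|q^{(m)}(j)|\to 0$ once $m>k-1$. Uniform convergence on $|\theta|\ge\delta$ justifies term-by-term differentiation, yielding $\sigma\in C^{k}$; as $k$ is arbitrary and \eqref{eq:It1} is valid for every $m\ge 1$, we conclude $\sigma\in C^{\infty}(\bbT\setminus\{1\})$.

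The only real difficulty is the bookkeeping in the iterated Abel summation and the verification that the boundary terms vanish at each stage, but this is driven entirely by the arithmetic decay \eqref{eq:It1} of iterated differences of $q$, which the paper has already recorded; no substantive new ingredient is needed.
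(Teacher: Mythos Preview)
Your boundedness argument is essentially the paper's: split at $N\sim 1/|\theta|$, bound the short range via $|\sin(j\theta)|\le j|\theta|$, and handle the tail with one summation by parts. That part is fine (and your $O((\log(1/|\theta|))^{-\alpha})$ refinement is correct).

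The smoothness argument, however, has a real gap. Iterated partial sums of $\sin(j\theta+k\pi/2)$ are \emph{not} uniformly bounded on $|\theta|\ge\delta$: writing $S_m=\sum_{i=2}^m\sin(i\theta)=\bigl(\cos(3\theta/2)-\cos((m+\tfrac12)\theta)\bigr)/(2\sin(\theta/2))$, one sees that $\sum_{m=2}^j S_m$ contains the term $(j-1)\cos(3\theta/2)/(2\sin(\theta/2))$, which grows linearly in $j$; the $m$-fold iterated sum grows like $j^{m-1}$. In addition, the forward difference of $j^k q(j)$ is not $j^{k-1}q^{(1)}(j)$: the discrete Leibniz rule only gives $\Delta^m(j^k q(j))=O(j^{k-1-m}(\log j)^{-\alpha})$. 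The product of the true coefficient with the true iterated partial sum is therefore $O(j^{k-2}(\log j)^{-\alpha})$ regardless of $m$, so for $k\ge1$ no number of Abel steps makes the series absolutely convergent, and the boundary terms $\Delta^{m-1}(N^k q(N))\cdot B_N^{(m)}=O(N^{k-1}(\log N)^{-\alpha})$ do not vanish. (Already for $\alpha=0$, $k=1$ the formal derivative series $\sum_j\cos(j\theta)$ diverges, even though $\sigma$ is smooth on $\bbT\setminus\{1\}$.)

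The paper circumvents this by reversing the order of operations: it sums by parts \emph{before} differentiating. Using $\Delta^{M+1}\mu^j=(\mu-1)^{M+1}\mu^j$, one obtains
\[
(\mu-1)^{M+1}\sum_{j\ge2}q(j)\mu^j=(-1)^{M+1}\sum_{j\ge2}q^{(M+1)}(j)\mu^j+p_M(\mu),
\]
with $p_M$ a polynomial. The right-hand series has coefficients $q^{(M+1)}(j)=O(j^{-M-2})$, so $M$ termwise differentiations are now legitimate; dividing by $(\mu-1)^{M+1}$, which is smooth and nonvanishing on $\bbT\setminus\{1\}$, gives $\sigma\in C^M(\bbT\setminus\{1\})$. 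The factor $(\mu-1)^{M+1}$ does the job you wanted from iterated partial sums, but without the polynomial growth.
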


\begin{proof}
Note that for all $\mu\in\bbT$, the series  \eqref{eq:odd}  converges absolutely if $\alpha>1$ 
and conditionally if $\alpha\leq1$. 

First, we check that $\sigma\in L^\infty(\bbT)$.
We write $\mu=e^{i\theta}$, $\theta\in(-\pi,\pi]$.
For $\theta\not=0$, we set $N=[(2\abs{\theta})^{-1}]$
and  write $\sigma=\sigma_1+\sigma_2$, where
\[
\sigma_1(\mu)
=
\sum_{j=2}^N {q}(j)(\mu^j-\overline{\mu}^j),
\qquad
\sigma_2(\mu)
=
\sum_{j=N+1}^\infty {q}(j)(\mu^j-\overline{\mu}^j).
\label{eq:odd1}
\]
We consider these two functions separately.
Using the bounds $q(j)\leq (\log 2)^{-1} j^{-1}$ and
$$
\abs{\mu^j-\overline{\mu}^j}=2\abs{\sin(j\theta)}\leq 2j\abs{\theta},
$$
 we obtain the estimate
$$
\abs{\sigma_1(\mu)}
\leq
2\abs{\theta}
\sum_{j=2}^N j{q}(j)
\leq
2  (\log 2)^{-1}\abs{\theta}N
\leq
(\log 2)^{-1}.
$$

In order to estimate $\sigma_2$, let us use summation by parts:
\begin{multline}
(\mu-1)\sum_{j=N+1}^\infty {q}(j) \mu^j
=
\sum_{j=N+1}^\infty {q}(j) (\mu^{j+1}-\mu^j)
\\
=
-\sum_{j=N+1}^\infty q^{(1)}(j)\mu^{j+1}
-
{q}(N+1)\mu^{N+1}
\label{a12}
\end{multline}
where $q^{(1)}(j)$ is defined by \eqref{eq:It}.
By  \eqref{eq:It1}, we have ${q}^{(1)}(j)=O(j^{-2})$, $j\to\infty$, and hence
$$
\Abs{(\mu-1)\sum_{j=N+1}^\infty {q}(j) \mu^j }
\leq C_{1} \big( \sum_{j=N+1}^\infty j^{-2} + N^{-1}\big)\leq
C_{2} N^{-1}.
$$
In view of definition  \eqref{eq:odd1}, it follows that 
$$
\abs{\sigma_2(\mu)}\leq 2 \Abs{\sum_{j=N+1}^\infty {q}(j) \mu^j }
\leq  
\frac{2 C_{2}}{N\abs{\mu-1}}
=
\frac{ 2 C_{2}}{[(2\abs{\theta})^{-1}]\abs{e^{i\theta}-1}}
\leq 
C.
$$
Thus  $\sigma_2\in L^\infty(\bbT)$. 

It remains to prove   that
$\sigma\in C^M(\bbT\setminus\{1\})$ for any $M\in\bbN$. Choose $\mu\in\bbT$ and
put $a(j)=\mu^j$; then, by  definition \eqref{eq:It}, $a^{(M+1)}(j)=(\mu-1)^{M+1}\mu^j$. 
Similarly to \eqref{a12},
  by a repeated summation by parts procedure, we obtain the identity
\begin{multline}
(\mu-1)^{M+1}\sum_{j=2}^\infty {q}(j) \mu^j
=
\sum_{j=2}^\infty {q}(j) a^{(M+1)}(j)
\\
=
(-1)^{M+1}\sum_{j=2}^\infty {q}^{(M+1)}(j) a(j)+p_M(\mu)
\label{a15}
\end{multline}
with some polynomial $p_M$. Since, by \eqref{eq:It1}, 
$
q^{(M+1)}(j) = O(j^{-2-M})   
$ as $j\to\infty$ and $ a(j)=\mu^j$,
the function of $\mu$ in the right-hand side of \eqref{a15} is in $C^M(\bbT)$. 
It follows that $\sigma\in C^M(\bbT\setminus\{1\})$ and hence   $\sigma\in C^\infty(\bbT\setminus\{1\})$. 
\end{proof}

\begin{remark}
(i)  Of course,  the singular support of 
$\sigma$ is non-empty, that is,  $1\in\sing\supp \sigma$. 
In fact, it can be verified  that  
$$
 \sigma(e^{i\theta})=\pi i \sign\theta \abs{\log\abs{\theta}}^{-\alpha}(1+o(1)), \quad \theta\to0.
$$

(ii)   If $\alpha>1$, then instead of the 
odd extension of $q(j)$ to the negative $j$, one can extend it by zero, i.e.
one can choose
$$
\wt\sigma(\mu)
=
\sum_{j=2}^\infty {q}(j)\mu^j.
$$
This doesn't work for $\alpha\leq1$ since $\wt\sigma(\mu)$ is unbounded as $\mu\to 1$ in this case.
\end{remark}

According to  definition \eqref{eq:odd}, we have
$\wh{\sigma} (j)=q(j)$  for all $j\geq 0$. Hence, it follows from relation 
 \eqref{b1} that the operators $H(\sigma)$ and $\Gamma(q)$ are unitarily equivalent. So the next assertion is a direct consequence of Theorem~\ref{thm.a1a}.

\begin{theorem}\label{mod}
Let the function $\sigma(\mu)$ be defined by formula \eqref{eq:odd} where $q(j)$ are given by \eqref{a16} and $\alpha>0$.
Then the following asymptotic relation holds true:
$$
s_n(H(\sigma))= v(\alpha )n^{-\alpha}+o(n^{-\alpha}), 
\quad 
n\to\infty.
$$
\end{theorem}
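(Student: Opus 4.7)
The plan is to reduce Theorem~\ref{mod} directly to Theorem~\ref{thm.a1a} via the unitary equivalence \eqref{b1}. The substance is already available, and the only thing to check is that the symbol $\sigma$ built in \eqref{eq:odd} has the correct non-negative Fourier coefficients.

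First I would compute the Fourier coefficients of $\sigma$. Since $\sigma \in L^\infty(\bbT) \subset L^2(\bbT)$ by Lemma~\ref{locs}, the series \eqref{eq:odd} converges in $L^2(\bbT)$ (and in fact pointwise away from $\mu=1$). Integrating term by term against $\mu^{-j} dm(\mu)$ and using orthonormality of the basis $\{\mu^k\}_{k\in\bbZ}$ in $L^2(\bbT)$ gives
\[
\wh\sigma(j)=
\begin{cases}
q(j), & j\geq 2,\\
0, & j\in\{0,1\},\\
-q(\abs{j}), & j\leq -1.
\end{cases}
\]
Since $q(0)=q(1)=0$ by our convention, this means $\wh\sigma(j)=q(j)$ for every $j\geq 0$.

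Next, I would invoke the identification \eqref{b1}: with the discrete Fourier transform $\calF:H^2(\bbT)\to\ell^2(\bbZ_+)$, we have $\Gank(q)=\calF H(\sigma)\calF^*$, because the matrix elements of $H(\sigma)$ in the basis $\{\mu^j\}_{j=0}^\infty$ are $\wh\sigma(j+k)=q(j+k)$. Since $\calF$ is unitary, the singular values of $H(\sigma)$ and of $\Gank(q)$ coincide for every $n$.

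Finally, I would apply Theorem~\ref{thm.a1a}, which gives the asymptotic $s_n(\Gank(q))=v(\alpha)n^{-\alpha}+o(n^{-\alpha})$, and conclude the same for $s_n(H(\sigma))$. There is no real obstacle here: the work has been done in Lemma~\ref{locs} (to guarantee that $\sigma$ is a legitimate $L^\infty$ symbol so that $H(\sigma)$ is defined and bounded) and in Theorem~\ref{thm.a1a} (to give the eigenvalue asymptotics of the model Hankel matrix). The one subtle point to be careful with is the mild divergence of the defining series \eqref{eq:odd} when $\alpha\leq 1$, but it converges in $L^2(\bbT)$ regardless, which is all that is needed to read off the Fourier coefficients.
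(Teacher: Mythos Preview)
Your proposal is correct and follows exactly the paper's approach: verify that $\wh\sigma(j)=q(j)$ for $j\geq0$, use the unitary equivalence \eqref{b1} to identify $H(\sigma)$ with $\Gank(q)$, and then read off the asymptotics from Theorem~\ref{thm.a1a}. The paper states this as a one-line consequence just before Theorem~\ref{mod}; your added remarks on $L^2$-convergence of the series are correct and harmless.
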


\subsection{Rotation of the symbol}
For a parameter $\zeta\in\bbT$, 
let $R_{\zeta}$ be the  ``rotation by $\zeta$'' operator:  
$$
(R_{\zeta}f)(\mu)= f(\mu/\zeta).
$$
Obviously, $R_\zeta$ is a
unitary operator  in $L^2 (\bbT)$ and in  $H^2 (\bbT)$.
Similarly, let
$V_\zeta$ be the  multiplication by $\zeta^{-j}$:
$$
(V_\zeta u)(j)=\zeta^{-j}u(j) .
$$
 Obviously, $V_\zeta$ is a unitary operator  in $\ell^2 (\bbZ)$ and in $\ell^2 (\bbZ_+)$.

\begin{lemma}\label{zeta} 
For  arbitrary $\zeta\in\bbT$, we have  the following statements:
\begin{enumerate}[\rm(i)]
\item
If $\omega\in L^\infty (\bbT)$, then
$$
H(R_\zeta\omega)=R_{\zeta} H(\omega ) R_{\zeta}.
$$
In particular, if $H(\omega)$ is compact, then
$$
s_n(\Hank(R_\zeta \omega))=s_n(\Hank(\omega)), \quad \forall n\geq1.
$$

\item
 For any sequence $h$ such that $\Gank(h)$ is bounded, we have
$$
\Gank(V_\zeta h)=V_\zeta \Gank(h) V_\zeta.
$$
In particular,  if $\Gank(h)$ is compact, then
$$s_n(\Gank(V_\zeta h))=s_n(\Gank(h)), \quad\forall n\geq1.$$
\end{enumerate}
\end{lemma}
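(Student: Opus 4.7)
The plan is that both statements are essentially bookkeeping identities and are in fact equivalent to each other under the Fourier isomorphism $\calF$ from \eqref{b1}, which intertwines $R_\zeta$ on $H^2(\bbT)$ with $V_\zeta$ on $\ell^2(\bbZ_+)$. So I would prove (i) intrinsically on $H^2(\bbT)$ and then either deduce (ii) by conjugating with $\calF$ or, more transparently, verify (ii) by a direct matrix-element computation.

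For (i) I would exploit three elementary commutation identities. First, since $R_\zeta \mu^j = \zeta^{-j}\mu^j$, the operator $R_\zeta$ preserves $H^2(\bbT)$ and hence commutes with $P_+$. Second, writing out $(W R_\zeta f)(\mu) = f(\bar\mu/\zeta)$ and using $1/\zeta = \bar\zeta$ on $\bbT$, I get $W R_\zeta = R_{\bar\zeta} W$. Third, substitution $\mu\mapsto\mu/\zeta$ inside multiplication gives $R_\zeta T_\omega R_{\bar\zeta} = T_{R_\zeta \omega}$, where $T_\omega$ denotes multiplication by $\omega$. Plugging these into the definition $H(\omega) f = P_+ T_\omega W f$ (with $f\in H^2(\bbT)$), I obtain
$$
R_\zeta H(\omega) R_\zeta = R_\zeta P_+ T_\omega W R_\zeta = P_+ R_\zeta T_\omega R_{\bar\zeta} W = P_+ T_{R_\zeta \omega} W = H(R_\zeta \omega),
$$
which is the claimed operator identity. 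As a sanity check, both sides have matrix elements $\zeta^{-(j+k)}\widehat\omega(j+k)$ in the basis $\{\mu^j\}_{j\geq 0}$.

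For (ii), the matrix element of $\Gamma(V_\zeta h)$ at position $(j,k)$ is $(V_\zeta h)(j+k) = \zeta^{-(j+k)} h(j+k) = \zeta^{-j} \cdot h(j+k) \cdot \zeta^{-k}$, which is precisely the $(j,k)$-entry of $V_\zeta \Gamma(h) V_\zeta$; this one line suffices. The singular-value statements then follow immediately from the fact that $R_\zeta$ on $H^2(\bbT)$ and $V_\zeta$ on $\ell^2(\bbZ_+)$ are unitary, together with the standard invariance $s_n(UXV)=s_n(X)$ for unitary $U$, $V$. There is no real obstacle in any of this; the only piece requiring mild care is keeping track of $\zeta$ versus $\bar\zeta$ in the computation of $W R_\zeta$, where $|\zeta|=1$ is used crucially.
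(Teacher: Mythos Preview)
Your proof is correct and follows essentially the same approach as the paper. The paper's argument for (i) packages your identities slightly differently, recording only $P_+ R_\zeta = R_\zeta P_+$ and $R_\zeta W R_\zeta = W$ (the latter being equivalent to your $W R_\zeta = R_{\bar\zeta} W$ since $R_\zeta^{-1}=R_{\bar\zeta}$), and for (ii) simply appeals to the definition \eqref{eq:a5}, which is exactly your matrix-element computation.
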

 
\begin{proof}
Since
$$
P_{+}R_{\zeta}=R_{\zeta} P_{+} \quad {\rm and} \quad R_{\zeta}\inv R_{\zeta}= \inv,
$$
 assertion (i) is a direct consequence of the definition 
\eqref{eq:HH}  of the Hankel operator $H(\omega ) $. Assertion (ii) immediately follows from definition \eqref{eq:a5}. 
 \end{proof}

\subsection{Putting things together.}
Let the symbol $\sigma(\mu)$ be defined by relation \eqref{eq:odd} and let
\[
\omega_\natural (\mu)=\sum_{\ell=1}^L \omega_\ell (\mu)\quad {\rm where} \quad \omega_\ell (\mu)=b_\ell \sigma (\mu/\zeta_{\ell}).
\label{eq:SYM}
\]
According to Theorem~\ref{mod} and Lemma~\ref{zeta}(i)  we have
$$
s_n(H(\omega_\ell))= | b_{\ell} | v(\alpha )n^{-\alpha}+o(n^{-\alpha}), 
\quad 
n\to\infty.
$$
It follows from Lemma~\ref{locs} that $\omega_{\ell}\in L^\infty(\bbT )$ and $\omega_{\ell}\in C^\infty(\bbT\setminus\zeta_{\ell})$. Since  $\zeta_1,\dots,\zeta_L$ are distinct points, the localisation principle (Theorem~\ref{LOCAL}) is applicable to the sum \eqref{eq:SYM}.
 This yields
\[
\lim_{n\to\infty}ns_n(\Hank(\omega_\natural ))^{p}
=
\sum_{\ell=1}^L
\lim_{n\to\infty}ns_n(\Hank( \omega_\ell ))^{p}
=
v(\alpha)^{p}
\sum_{\ell=1}^L\abs{b_\ell}^{p}, \quad p=1/\alpha.
\label{a22}
\]

Note that, by the definition \eqref{eq:SYM},
$$
\wh\omega_\ell (j)= b_{\ell}\zeta^{-j}  \wh\sigma (j)    
$$
and hence according to formula  \eqref{eq:odd} 
$$
\wh\omega_\natural  (j)=\sum_{\ell=1}^L b_\ell \zeta_{\ell}^{-j} j^{-1} (\log j)^{-\alpha} =: h_\natural (j), \quad j\geq 2.
$$
Set $h_{\natural}(0)=h_{\natural}(1)=0$. 
Since the operators $\Hank(\omega_\natural)$ and $\Gank(h_\natural )$ are unitarily equivalent,  it follows from \eqref{a22} 
that
\[
\lim_{n\to\infty}ns_n(\Gank(h_\natural ))^{p}
=
v(\alpha)^{p}\sum_{\ell=1}^L\abs{b_\ell}^{p}.
\label{eq:22}
\]

Next, we consider the error term
$$
g(j)=\sum_{\ell=1}^L     \zeta_\ell^{-j} g_\ell(j)
$$
in \eqref{a13}. According to condition \eqref{a5a} it follows from Theorem~\ref{thm.z1} that $s_{n}(\Gamma (g_\ell))=o(n^{-\alpha})$ as $n\to\infty$. By Lemma~\ref{zeta}(ii), we also have $s_{n}(\Gamma (V_{\zeta_\ell} g_\ell))=o(n^{-\alpha})$ and hence 
\[
s_{n}(\Gamma ( g)) =o(n^{-\alpha})\quad {\rm as } \quad n\to\infty.
\label{eq:ERR}
\]

Since
$$
\Gank(h)=\Gank(h_\natural)+\Gank(g),
$$
we can use Lemma~\ref{lma.a2} with 
$A=\Gank(h_\natural )$ and 
$B=\Gank(g)$.  The required relations \eqref{z0c}, \eqref{a14} follow from \eqref{eq:22} and \eqref{eq:ERR}. 
   \qed

\section{Applications of localization principle: continuous case}\label{sec.c}

\subsection{Hankel operators in $L^2(\bbR_+)$}

Integral Hankel operators $\bGank(\bh)$ in the space  $L^2(\bbR_+)$
  are  defined by the relation
\begin{equation}
(\bGank(\bh)\bu)(t)=\int_0^\infty \bh(t+s)\bu (s)ds, \quad {\bf u}\in C_{0}^\infty (\bbR_{+}),
\label{a0}
\end{equation}
where at least $\bh\in L^1_\loc(\bbR_+)$; this function is called the \emph{kernel} of the Hankel operator $\bGank(\bh)$. 
Under the assumptions on $\bh$ below the operators $\bGank(\bh)$ are compact.

Similarly to the discrete case, Hankel operators $\bHank( {\bomega} )$ in the Hardy space $H^2_+(\bbR)$ 
are  unitarily equivalent to integral operators $\bGank(\bh)$ in the space $L^2(\bbR_{+})$:
\begin{equation}
\Phi \bHank( {\bomega} )\Phi^* = \bGank(\bh)
\quad \text{ if }\quad
\bh(t)=\frac1{\sqrt{2\pi}}  \wh{\bomega} (t)
\quad \text{for $t>0$.}
\label{e4}
\end{equation}
The Fourier transform $\wh{\bomega}$ of $ \bomega\in L^\infty (\bbR)$ should in general be understood 
in the sense of distributions (for example, on the Schwartz class ${\mathcal S}' (\bbR)$) and 
the precise meaning of \eqref{e4} is given by the equation
$$
(\bHank({\bomega})\Phi^* {\bf u} , \Phi^* {\bf u}) 
=
( \bGank(\bh) {\bf u},{\bf u}), \quad {\bf u}\in C_{0}^\infty (\bbR_{+}).
$$
A function $\bomega (x)$ satisfying  \eqref{e4} is known as a symbol 
of the Hankel operator $\bGank(\bh)$.

\subsection{Main result in the continuous case}

In the discrete case, the spectral asymptotics of $\Gank(h)$ is determined by the asymptotic
behavior of the sequence $h(j)$ as $j\to\infty$. 
In the continuous case, the behavior of the kernel $\bh(t)$ for $t\to\infty$ and for $t\to 0$
as well as  the local singularities of $\bh(t)$  at positive points $t$
contribute to the spectral asymptotics of $\bGank(\bh)$.
In the following   result we exclude  local singularities. We denote $\jap{x}=\sqrt{1+\abs{x}^2}$.

\begin{theorem}\label{thm.d3}
Let $\alpha>0$, let $a_1,\dots,a_L\in\bbR$ 
be distinct numbers and let $\bb_0,\bb_1,\dots,\bb_L\in\bbC$. 
Let the number $M=M(\alpha)$ be given by \eqref{a3}. 
Suppose that $\bh\in L^\infty_\loc(\bbR_+)$ if $\alpha<1/2$ and $\bh\in C^M(\bbR_+)$ if $\alpha\geq1/2$. 
Assume that 
\begin{align}
\bh(t)&=\sum_{\ell=1}^L \bigl(\bb_\ell t^{-1}(\log t)^{-\alpha}+  \bg_\ell(t)\bigr)e^{- i a_\ell t}, 
\quad t\geq2,
\label{z2}
\\
\bh(t)&=\bb_0t^{-1}\bigl(\log(1/t)\bigr)^{-\alpha}+  \bg_0(t), 
\quad t\leq 1/2,
\label{z3}
\end{align}
where the error terms $\bg_\ell$ satisfy the estimates
\[
\bg_\ell^{(m)}(t)
=
o( t^{-1-m}\jap{\log t}^{-\alpha}), 
\quad 
m=0,\dots,M(\alpha),  
\label{z4}
\]
as $ t\to\infty$ for $\ell=1,\ldots, L$ and
 as $t\to 0$ for $\ell=0$.
Then the singular values of the integral Hankel
operator $\bGank(\bh)$ in $L^2(\bbR_+)$  satisfy the asymptotic relation
\[
s_n(\bGank(\bh))= {\bf c} \, n^{-\alpha}+o(n^{-\alpha}), \quad n\to\infty,
\label{d11}
\]
where 
\[
{\bf c}  =v(\alpha)\Big(\sum_{\ell=0}^L \abs{\bb_\ell}^{1/\alpha}\Big)^\alpha
\label{eq:d11}
\]
and the coefficient 
$v(\alpha)$ is given by formula \eqref{a7}.
\end{theorem}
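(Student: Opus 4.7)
The plan is to follow the proof of Theorem~\ref{thm.a4} from the discrete case, with Theorem~\ref{LOCAL} replaced by its continuous analogue Theorem~\ref{LOCALC}. Via the unitary equivalence \eqref{e4}, we pass from $\bGank(\bh)$ on $L^2(\bbR_+)$ to $\bHank(\bomega)$ on $H^2_+(\bbR)$, and split $\bomega$ into pieces whose singular supports are pairwise disjoint in $\bbR_*$. The role played by Lemma~\ref{locs} in the discrete proof will be played by a continuous analogue obtained by transporting the construction through the conformal map \eqref{b8a}.

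First I would introduce model kernels for each singular contribution. Let $\bq(t) = t^{-1}(\log t)^{-\alpha}$ for $t \geq 2$, $\bq(t) = 0$ for $t \leq 1$, interpolated smoothly on $[1,2]$; let $\bq_0(t) = t^{-1}(\log(1/t))^{-\alpha}$ for $t \leq 1/2$ and $\bq_0(t) = 0$ for $t \geq 1$; and for $\ell = 1,\ldots,L$, set $\bq_\ell(t) = e^{-i a_\ell t} \bq(t)$. By the continuous analogue of Theorem~\ref{thm.a1a} established in \cite{II}, each operator $\bGank(\bq_\ell)$ satisfies $s_n(\bGank(\bq_\ell)) = v(\alpha) n^{-\alpha} + o(n^{-\alpha})$, and similarly for $\bGank(\bq_0)$.

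Next I would identify the singular supports of the corresponding symbols $\bomega_\ell$. By a continuous counterpart of Lemma~\ref{locs} — most cleanly obtained by pulling the symbol $\sigma$ back via the conformal map \eqref{b8a} and the unitary $\calU$, under which the point $\mu = 1 \in \bbT$ is mapped to $\infty \in \bbR_*$ — one finds $\sing\supp \bomega_0 = \{\infty\}$ and $\sing\supp \bomega_\ell = \{a_\ell\}$ for $\ell \geq 1$, the latter because multiplication of the kernel by $e^{-i a_\ell t}$ translates the symbol by $a_\ell$. Since $a_1,\dots,a_L$ are distinct real numbers, these $L+1$ singular supports are pairwise disjoint in $\bbR_*$, and Theorem~\ref{LOCALC} applied to $\bomega_\natural = \sum_{\ell=0}^L \bb_\ell \bomega_\ell$ yields
\[
\lim_{n\to\infty} n\, s_n\bigl(\bHank(\bomega_\natural)\bigr)^p = v(\alpha)^p \sum_{\ell=0}^L \abs{\bb_\ell}^p, \quad p = 1/\alpha,
\]
and $\bHank(\bomega_\natural)$ is unitarily equivalent to $\bGank(\bh_\natural)$ with $\bh_\natural(t) = \bb_0 \bq_0(t) + \sum_{\ell=1}^L \bb_\ell e^{-i a_\ell t} \bq(t)$.

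The remainder $\bh - \bh_\natural$ consists of the error terms $\bg_\ell$ at $t \to \infty$ and $\bg_0$ at $t \to 0$, plus a contribution supported in $[1/2,2]$ which is bounded (or $C^{M(\alpha)}$ when $\alpha \geq 1/2$). By a continuous analogue of Theorem~\ref{thm.z1} from \cite{I}, the decay hypothesis \eqref{z4} guarantees $s_n(\bGank(\bh - \bh_\natural)) = o(n^{-\alpha})$, and Lemma~\ref{lma.a2} then delivers the asymptotics \eqref{d11}--\eqref{eq:d11}. The main obstacle is verifying that the short-time singularity of $\bq_0$ produces a symbol with singular support exactly $\{\infty\}$ and free of spurious singularities at finite points; this is where the conformal transport of Lemma~\ref{locs} is essential, since a direct Fourier-analytic argument is considerably more delicate for $\alpha \leq 1$.
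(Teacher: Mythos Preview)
Your overall strategy matches the paper's proof exactly: introduce model kernels $\bq_0$ and $\bq_\infty$, identify the singular supports of their symbols, apply Theorem~\ref{LOCALC}, and handle the remainder via the continuous analogue of Theorem~\ref{thm.z1} (this is Theorem~\ref{thm.d1} in the paper) together with Lemma~\ref{lma.a2}.

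The one genuine misstep is your proposed route to the singular support of the symbol of $\bq_0$. Conformal transport of the discrete symbol $\sigma$ from Lemma~\ref{locs} does not do what you need: the unitary $\calU$ carries $H(\sigma)$ to a Hankel operator $\bHank(\bomega)$ on $H^2_+(\bbR)$ whose symbol indeed has singular support $\{\infty\}$, but the kernel of that operator, i.e.\ $(2\pi)^{-1/2}\wh\bomega|_{\bbR_+}$, is \emph{not} $\bq_0$. The conformal map links the two symbol representations, not the discrete kernel $q(j)$ to the continuous kernel $\bq_0(t)$. You would then have to show that the transported kernel differs from $\bq_0$ by something satisfying the hypotheses of Theorem~\ref{thm.d1}, and there is no evident reason for that.

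The paper instead proves the continuous counterpart of Lemma~\ref{locs} directly (its Lemma~\ref{locsC}): with the odd extensions
\[
\bsigma_0(x)=2i\int_0^\infty \bq_0(t)\sin(xt)\,dt,\qquad
\bsigma_\infty(x)=2i\int_0^\infty \bq_\infty(t)\sin(xt)\,dt,
\]
one shows $\bsigma_0\in L^\infty(\bbR)\cap C^\infty(\bbR)$ and $\bsigma_\infty\in L^\infty(\bbR)\cap C^\infty(\bbR_*\setminus\{0\})$ by the same devices as in Lemma~\ref{locs}: split the integral at $t=1/|x|$ for boundedness, and integrate by parts for smoothness and decay at infinity. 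Your worry that a direct Fourier argument is ``considerably more delicate for $\alpha\leq1$'' is unfounded; for $\bsigma_0$ the integral is over a compact interval so $C^\infty$ is immediate, and the boundedness proof works uniformly in $\alpha\geq0$.
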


The proof in the continuous case follows the same general outline
as in the discrete case
with the only difference that the singularity of the kernel $\bh(t)$ at 
$t=0$ has to be treated separately. It corresponds to the singularity
of the symbol $\bomega(x)$ at infinity.

In Section~\ref{sec.f} we consider kernels $\bh (t)$ that   have a singularity at some positive point and admit representation \eqref{z2} for large $t$. It turns out that, similarly to  Theorem~\ref{thm.d3}, the contributions of the singularities of these two types  to the asymptotics 
of singular values are independent of each other.

\section{Proof of Theorem~\ref{thm.d3}}\label{sec.d}
 
The proof of Theorem~\ref{thm.d3} follows the scheme of the proof of Theorem~\ref{thm.a4}. 
The only new  point is that now we have to additionally establish the correspondence 
between symbols singular at infinity and kernels singular at $t=0$.

\subsection{Singular value estimates and asymptotics}

Let us state  the analogues of Theorems~\ref{thm.z1} and \ref{thm.a1a}. 

\begin{theorem}\label{thm.d1}\cite[Theorem~2.8]{I}
Let $\alpha>0$, and let the number $M=M(\alpha)$ be given by \eqref{a3}. Suppose that
$\bg \in L^\infty_\loc(\bbR_+)$ if $\alpha<1/2$ and $\bg \in C^M(\bbR_+)$ if $\alpha\geq1/2$.
Assume   that  
\begin{equation}
 \bg^{(m)}(t)
=o(t^{-1-m}\jap{\log t}^{-\alpha}) 
\quad \text{ as $t\to0$ and as $t\to\infty$}
\label{d3}
\end{equation}
for all $m=0,1,\dots,M$. Then   
\[
s_n(\bGank( \bg))=o(n^{-\alpha}), 
\quad 
n\to\infty.
\label{d4}
\]
\end{theorem}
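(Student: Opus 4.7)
The plan is to deduce the $o(n^{-\alpha})$ bound \eqref{d4} from the corresponding $O(n^{-\alpha})$ version (also proved in \cite{I}, mentioned in the excerpt just after Theorem~\ref{thm.z1}) via a splitting argument that exploits the linearity of the hypothesis in the decay constant. The $O$-statement I have in mind reads: if $\abs{\bg^{(m)}(t)} \leq C\, t^{-1-m}\jap{\log t}^{-\alpha}$ for $m=0,\dots,M$ and all $t>0$, then $s_n(\bGank(\bg)) \leq C' n^{-\alpha}$ with $C'$ linear in $C$.

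Given $\eps>0$, hypothesis \eqref{d3} provides $\delta \in (0,1/2)$ and $T > 2$ such that $\abs{\bg^{(m)}(t)} \leq \eps\, t^{-1-m}\jap{\log t}^{-\alpha}$ whenever $t \in (0,\delta] \cup [T,\infty)$ and $m \leq M$. I would then choose a cutoff $\chi \in C^\infty(\bbR_+)$ equal to $1$ on $[2\delta,T/2]$ and vanishing on $(0,\delta] \cup [T,\infty)$, arranged as a fixed smooth profile in the variable $\log t$ so that $\abs{\chi^{(m)}(t)} \leq C_m\, t^{-m}$ uniformly in $\delta$ and $T$; this logarithmic scaling is the natural one for the weight $t^{-1-m}\jap{\log t}^{-\alpha}$. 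Split $\bg = \bg_1 + \bg_2$ with $\bg_1 = \chi\bg$ and $\bg_2 = (1-\chi)\bg$.

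For the middle piece, $\bg_1$ is in $C^M$ and compactly supported in $(0,\infty)$, so $\bGank(\bg_1)$ has rapid singular-value decay: passing to the symbol via \eqref{e4}, then to the unit circle via the conformal map \eqref{b8a}, the compact support of $\bg_1$ away from both $0$ and $\infty$ translates into $C^\infty$ smoothness of the resulting symbol on $\bbT$, and Lemma~\ref{mm} yields $s_n(\bGank(\bg_1)) = O(n^{-\beta})$ for every $\beta > 0$. For the tail piece, the Leibniz rule together with the choice of $\chi$ gives $\abs{\bg_2^{(m)}(t)} \leq C\eps\, t^{-1-m}\jap{\log t}^{-\alpha}$ on all of $\bbR_+$ and for all $m \leq M$, whence the $O$-version applied to $\bg_2$ yields $s_n(\bGank(\bg_2)) \leq C'\eps\, n^{-\alpha}$. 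Combining the two bounds through Lemma~\ref{lma.a2} (with $A = \bGank(\bg_2)$ and $B = \bGank(\bg_1) \in \Sch_{1/\alpha,\infty}^0$) gives $\limsup_{n\to\infty} n^\alpha s_n(\bGank(\bg)) \leq C'\eps$, and letting $\eps \to 0$ proves \eqref{d4}.

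The main obstacle is of course the $O$-version itself, which is the technical heart of \cite{I} and is proved there through a dyadic decomposition of $\bg$ on logarithmic scales combined with direct singular-value estimates on each dyadic block. Taking that ingredient for granted, the only remaining technical point in the above plan is that $\chi$ and $1-\chi$ must satisfy the symbol-type derivative bounds $\abs{(\cdot)^{(m)}(t)} \lesssim t^{-m}$ with a constant independent of $\delta$ and $T$; this is automatic for $\chi(t) = \psi(\log t)$ with $\psi$ a fixed smooth profile and transitions located near $\log\delta$ and $\log T$.
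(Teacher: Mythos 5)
First, note that the paper does not prove this statement at all: it is imported verbatim from \cite[Theorem 2.8]{I}, so there is no internal proof to compare with, and your proposal is in effect an attempt to re-derive the $o$-result from a quantitative form of the $O$-result of \cite{I}. That overall scheme (peel off the part of $\bg$ living where the $\eps$-bound holds, treat the compactly supported remainder as negligible, recombine via Lemma~\ref{lma.a2}) is reasonable, granted two repairs: you need the $O$-theorem in quantitative form (the weighted sup of $\bg^{(m)}$ over \emph{all} $t>0$ controls $\sup_n n^\alpha s_n$), which is stronger than the asymptotic statement quoted after Theorem~\ref{thm.z1}, though it is what \cite{I} actually provides; and the transitions of $\chi$ must lie inside the region where the $\eps$-bound is known---with your choice ($\chi=1$ on $[2\delta,T/2]$, $\chi=0$ off $[\delta,T]$) the piece $\bg_2=(1-\chi)\bg$ satisfies no $\eps$-small bound on $[\delta,2\delta]\cup[T/2,T]$; put the transitions on $[\delta/2,\delta]$ and $[T,2T]$ instead.

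The genuine gap is the middle piece. Compact support of $\bg_1=\chi\bg$ in $(0,\infty)$ does \emph{not} make the symbol $C^\infty$ on $\bbT$: under the Fourier transform the roles are dual, so the compact support makes the symbol real-analytic at every finite $x$, but the smoothness of the circle symbol at $\mu=1$ (the image of $x=\infty$) is governed by the decay of $\wh{\bg_1}$, i.e.\ by the smoothness of $\bg_1$ itself---and $\bg_1$ is only $C^{M}$, indeed merely bounded when $\alpha<1/2$. (That all derivatives of the symbol tend to $0$ at $\pm\infty$ by Riemann--Lebesgue is not enough; smoothness at $\mu=1$ requires power decay of the symbol after repeated application of $x^2\frac{d}{dx}$.) Consequently Lemma~\ref{mm} is not applicable, and the claim $s_n(\bGank(\bg_1))=O(n^{-\beta})$ for every $\beta>0$ is false in general: a bounded compactly supported kernel assembled from many well-separated modulations $e^{i\lambda_j t}\psi(t)$ has, by Lemma~\ref{zetaC}(ii) and near-orthogonality, singular values decaying only like a fixed power close to $n^{-1/2}$. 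What your argument actually needs is only $\bGank(\bg_1)\in\Sch_{1/\alpha,\infty}^0$, and this is true but by a different mechanism: for $\alpha<1/2$ (so $M=0$) the kernel $\bg_1(t+s)$ is bounded and supported in a bounded region of $\bbR_+^2$, hence Hilbert--Schmidt, giving $s_n=o(n^{-1/2})=o(n^{-\alpha})$; for $\alpha\ge 1/2$ the kernel is $C^{M}$ on a bounded square, and the Birman--Solomyak estimates for integral operators whose kernel has Sobolev smoothness of order $M$ in one variable (see \cite{BSbook}) give $s_n=O(n^{-M-1/2})=o(n^{-\alpha})$, since $M=[\alpha]+1>\alpha$. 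With that substitution (and the cutoff repair) your reduction goes through, but as written the key step rests on an incorrect smoothness claim.
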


In \cite{I} we also have a result with $O$ instead of $o$ in \eqref{d3} and \eqref{d4}, 
although we will not need it in this paper.  
Observe that for $\alpha<1/2$ we need only the estimate on $\bg$, 
whereas for $\alpha\geq1/2$ we also need estimates on the derivatives
$\bg^{(m)}$.

Next,  we define  model kernels $\bq_0$, $\bq_\infty$.
Choose some non-negative functions $\chi_0, \chi_\infty \in C^\infty(\bbR)$  such that
$$
\chi_0(x)=
\begin{cases}
1& \text{for $\abs{x}\leq c_1$,}
\\
0& \text{for  $\abs{x}\geq c_2$,}
\end{cases}
\quad
\chi_\infty(x)=
\begin{cases}
0& \text{for $\abs{x}\leq C_1$,}
\\
1& \text{for  $\abs{x}\geq C_2$,}
\end{cases}
$$
for some $0<c_1<c_2<1$ and $1<C_1<C_2$.

\begin{theorem}\cite[Theorem~1.2]{II}\label{thm.a1aC}
For $\alpha>0$, set
\[
\bq_0(t)=\chi_0(t)t^{-1}(\log(1/t))^{-\alpha}, 
\quad
\bq_\infty(t)=\chi_\infty(t)t^{-1}(\log t)^{-\alpha}, 
\quad
t>0.
\label{eq:qq}
\]   
Then 
$$
s_n(\bGank(\bq_0))= v(\alpha) \, n^{-\alpha}+o(n^{-\alpha}), \quad 
s_n(\bGank(\bq_\infty))= v(\alpha) \, n^{-\alpha}+o(n^{-\alpha}), \quad 
n\to\infty.
$$
\end{theorem}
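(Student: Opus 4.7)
The plan is to mirror the proof of Theorem~\ref{thm.a4}. I would reduce the asymptotic problem to the model kernels of Theorem~\ref{thm.a1aC} via the continuous localization principle (Theorem~\ref{LOCALC}), and then absorb the given error terms using Theorem~\ref{thm.d1} and Lemma~\ref{lma.a2}. Concretely, I would decompose $\bh = \bh_\natural + \bg$ with
\[
\bh_\natural(t) = \bb_0\, \bq_0(t) + \sum_{\ell=1}^L \bb_\ell\, e^{-i a_\ell t}\, \bq_\infty(t),
\]
where $\bq_0,\bq_\infty$ are the model kernels from \eqref{eq:qq}. The error $\bg$ then collects the given $\bg_\ell$ of \eqref{z2}--\eqref{z3}, together with a smooth, compactly supported discrepancy on the transition region $1/2\leq t\leq 2$ produced by the cutoffs $\chi_0,\chi_\infty$. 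This total error still satisfies the decay hypothesis \eqref{d3} at both ends, so Theorem~\ref{thm.d1} gives $s_n(\bGank(\bg)) = o(n^{-\alpha})$.

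Next I would establish the continuous analogue of Lemma~\ref{locs}: construct symbols $\bsigma_\infty, \bsigma_0 \in L^\infty(\bbR)$ representing $\bq_\infty, \bq_0$ via \eqref{e4}, and show that $\bsigma_\infty \in C^\infty(\bbR_*\setminus\{0\})$ while $\bsigma_0 \in C^\infty(\bbR_*\setminus\{\infty\})$. The proof would follow the pattern of Lemma~\ref{locs}, splitting the defining oscillatory integral at scale $\abs{t}\sim 1/\abs{x}$ and using iterated integration by parts together with the derivative estimates $\abs{(\partial_t)^m[t^{-1}(\log t)^{-\alpha}]} \leq C\, t^{-1-m}(\log t)^{-\alpha}$ at infinity (and the analogous estimate at $0$) to control the non-polynomial contributions. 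The kernel-side modulation identity $\bGank(e^{-iat}\bh) = \bU_a \bGank(\bh)\bU_a$, where $\bU_a$ is unitary multiplication by $e^{-ias}$ on $L^2(\bbR_+)$, shows simultaneously that (a) the symbol of $e^{-ia_\ell t}\bq_\infty(t)$ is the translate $\bsigma_\infty(\,\cdot\, - a_\ell)$ and hence has singular support $\{a_\ell\}$, and (b) the singular values of $\bGank(e^{-ia_\ell t}\bq_\infty)$ coincide with those of $\bGank(\bq_\infty)$.

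Since $a_1,\dots,a_L$ are pairwise distinct finite points and all differ from $\infty\in\bbR_*$, the singular supports of the individual symbols making up the symbol of $\bh_\natural$ are pairwise disjoint. Theorem~\ref{LOCALC} then applies. Combining it with Theorem~\ref{thm.a1aC} and the modulation identity yields
\[
\lim_{n\to\infty} n\, s_n(\bGank(\bh_\natural))^{1/\alpha} = v(\alpha)^{1/\alpha} \sum_{\ell=0}^L \abs{\bb_\ell}^{1/\alpha},
\]
which gives \eqref{d11} with the constant ${\bf c}$ from \eqref{eq:d11} for $\bh_\natural$. A final application of Lemma~\ref{lma.a2} with $A=\bGank(\bh_\natural)$ and $B=\bGank(\bg)$ passes from $\bh_\natural$ to $\bh$.

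The main obstacle I anticipate is the continuous analogue of Lemma~\ref{locs}, and in particular the precise identification of $\sing\supp \bsigma_0$ as $\{\infty\}$. In the one-point compactification $\bbR_*$ this requires proving that $\bsigma_0$ and all of its derivatives are bounded on compacts of $\bbR$, and yet fail the matching condition \eqref{eq:id1} only as $\abs{x}\to\infty$. This reduces to asymptotic analysis of the oscillatory integral $\int_0^{1/2} t^{-1}(\log(1/t))^{-\alpha}\, e^{\pm ixt}\, dt$ as $\abs{x}\to\infty$ with derivative bounds uniform in the order of differentiation; the log-weighted singularity at $t=0$ makes these integrals genuinely non-smooth at infinity, and quantifying this failure in every order of differentiation is the technical heart of the argument.
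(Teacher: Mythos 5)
There is a genuine gap, and it is structural: your argument does not prove the statement in question at all. The statement to be proved is Theorem~\ref{thm.a1aC} itself, i.e.\ the singular value asymptotics $s_n(\bGank(\bq_0))=v(\alpha)n^{-\alpha}+o(n^{-\alpha})$ and $s_n(\bGank(\bq_\infty))=v(\alpha)n^{-\alpha}+o(n^{-\alpha})$ for the two model kernels in \eqref{eq:qq}. What you have written is instead an outline of the proof of Theorem~\ref{thm.d3} (the decomposition $\bh=\bh_\natural+\bg$, Theorem~\ref{thm.d1} for the error, Lemma~\ref{locsC}-type symbol analysis, Theorem~\ref{LOCALC}, Lemma~\ref{lma.a2}), and at the decisive moment you write ``Combining it with Theorem~\ref{thm.a1aC}\dots''. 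In other words, you invoke the very theorem you were asked to prove, so as a proof of Theorem~\ref{thm.a1aC} the argument is circular. None of the machinery you deploy can close this circle: the localization principle only \emph{combines} already-known one-point asymptotics, Theorem~\ref{thm.d1} only produces $o(n^{-\alpha})$ bounds, and Lemma~\ref{lma.a2} only transfers asymptotics across $\Sch_{p,\infty}^0$ perturbations. The coefficient $v(\alpha)$ for a single model kernel cannot emerge from any of these tools.

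For comparison, the paper does not prove Theorem~\ref{thm.a1aC} either; it is quoted verbatim from the authors' earlier work \cite[Theorem~1.2]{II}, where the asymptotics of eigenvalues of Hankel operators with the model kernels $t^{-1}(\log t)^{-\alpha}$ (at infinity) and $t^{-1}(\log(1/t))^{-\alpha}$ (at zero) is established by a separate, substantial spectral analysis; the present paper only uses that result as an input, exactly as you do. If you wanted to give a genuine proof within the framework of this paper, you would have to either reproduce the analysis of \cite{II}, or at minimum carry out a careful transfer of the discrete model result (Theorem~\ref{thm.a1a}) to the continuous setting via the unitary equivalences \eqref{eq:HG} and \eqref{e4} together with a symbol comparison argument showing that the discrepancy between the transplanted discrete symbol and $\bsigma_0$ (respectively $\bsigma_\infty$) contributes only $o(n^{-\alpha})$; neither is attempted in your proposal, and the second route is itself nontrivial and is not what either paper does.
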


Of course, 
this result corresponds to particular cases of  Theorem~\ref{thm.d3} 
with $L=1$, $a_{1}=0$, $\bb_{0} =1$, $\bb_1=0$ and $\bb_0=0$, $\bb_1=1$.

\subsection{Model symbols}
In order to put together the contributions of different terms in \eqref{z2} and \eqref{z3}, 
we use the localization principle in the form of Theorem~\ref{LOCALC}. 
To that end, we need to determine the singular supports of the symbols corresponding to the 
model kernels $\bq_0$, $\bq_\infty$.
Again, we will choose functions $\bsigma_0$, $\bsigma_\infty$ whose Fourier transform 
coincides with the odd extension of $\bq_0$, $\bq_\infty$ to the real line. 
The proof below is very similar to that of Lemma~\ref{locs}.

\begin{lemma}\label{locsC}
Let $\bsigma_0$, $\bsigma_\infty$ be defined by 
\[
\bsigma_0(x)
=
2i
\int_0^\infty \bq_0(t)\sin(xt)dt, 
\quad
\bsigma_\infty(x)
=
2i
\int_0^\infty \bq_\infty(t)\sin(xt)dt,
\quad x\in\bbR,
\label{eq:int}
\]
where $\bq_0(t)$ and $\bq_\infty(t)$ are given by \eqref{eq:qq} with $\alpha\geq 0$.
Then 
$\bsigma_0,\bsigma_\infty\in L^\infty(\bbR)$ and $\bsigma_{0}\in C^\infty (\bbR)$, $\bsigma_\infty\in C^\infty (\bbR_{*}\setminus\{0\})$. 
\end{lemma}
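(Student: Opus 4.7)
The plan is to parallel the proof of Lemma~\ref{locs}, handling $\bsigma_0$ and $\bsigma_\infty$ separately; integration by parts on $\sin(xt)$ plays the role of Abel summation in the discrete case. In both situations, smoothness in $x$ is established by differentiating (or iteratively integrating by parts in) the defining integral, while the $L^\infty(\bbR)$ bound is nontrivial because $\bq_0$ and $\bq_\infty$ are not in $L^1$ when $\alpha\leq 1$, and requires splitting at the scale $N=1/|x|$ so as to exploit the oscillation of $\sin(xt)$.

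For $\bsigma_0$, whose integrand is supported in $[0,c_2]$, smoothness on all of $\bbR$ is immediate by differentiation under the integral: for $m\geq 1$ one uses $|t^m\bq_0(t)|\leq t^{m-1}(\log(1/t))^{-\alpha}$, integrable near $0$; for $m=0$ the estimate $|\sin(xt)|\leq|xt|$ tames the $t^{-1}$ singularity. For $L^\infty$ boundedness, when $|x|\geq 1/c_1$ I would write $\bsigma_0=2i\int_0^N+2i\int_N^{c_2}$ with $N=1/|x|<c_1$. On $(0,N)$, the bounds $|\sin(xt)|\leq |x|t$ and $(\log(1/t))^{-\alpha}\leq(\log|x|)^{-\alpha}$ give a contribution $\leq 2|x|N(\log|x|)^{-\alpha}=2(\log|x|)^{-\alpha}$. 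On $(N,c_2)$, integration by parts via $\sin(xt)=-\frac{1}{x}\partial_t\cos(xt)$ produces a boundary term at $t=N$ equal to $\bq_0(N)/|x|=(\log|x|)^{-\alpha}$ and a remainder bounded by $|x|^{-1}\int_N^{c_2} t^{-2}(\log(1/t))^{-\alpha}\,dt$. The substitution $u=\log(1/t)$ and Laplace's method yield $\int_N^{c_2} t^{-2}(\log(1/t))^{-\alpha}\,dt\sim |x|(\log|x|)^{-\alpha}$, so this remainder is again $O((\log|x|)^{-\alpha})$.

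For $\bsigma_\infty$, whose integrand is supported in $[C_1,\infty)$, smoothness on $\bbR\setminus\{0\}$ and at the identified point at infinity follows from iterated IBP: for $k\geq m+1$, $\partial_x^m\bsigma_\infty(x)$ equals $C_k x^{-k}$ times an integral whose integrand is $\partial_t^k[\chi_\infty(t)t^{m-1}(\log t)^{-\alpha}]$ times a bounded trigonometric function of $xt$, and this integral converges absolutely since $\partial_t^k[\chi_\infty(t)t^{m-1}(\log t)^{-\alpha}]=O(t^{m-1-k}(\log t)^{-\alpha})$ at infinity. Consequently $\partial_x^m\bsigma_\infty\in C(\bbR\setminus\{0\})$ and $\partial_x^m\bsigma_\infty(x)=O(|x|^{-k})\to 0$ as $|x|\to\infty$, so all derivatives have matching limits (namely $0$) at $\pm\infty$, proving $\bsigma_\infty\in C^\infty(\bbR_*\setminus\{0\})$. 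For $L^\infty$ boundedness: the $m=0$, $k=1$ case already yields $\bsigma_\infty(x)=O(1/|x|)$ for $|x|\geq 1/C_2$; for $|x|\leq 1/C_2$ I would split at $N=1/|x|\geq C_2$, bound $\int_{C_1}^N$ by $|x|\int_{C_1}^N(\log t)^{-\alpha}\,dt\lesssim(\log|x|^{-1})^{-\alpha}$ via $|\sin(xt)|\leq |x|t$, and estimate $\int_N^\infty$ by a single IBP (using that $\chi_\infty\equiv 1$ on $[N,\infty)$) with both boundary and remainder terms of order $(\log|x|^{-1})^{-\alpha}$.

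The main obstacle is the sharp logarithmic asymptotic $\int_N^{c_2}t^{-2}(\log(1/t))^{-\alpha}\,dt\sim |x|(\log|x|)^{-\alpha}$, which is exactly what is needed for the IBP remainder to be absorbed by the prefactor $|x|^{-1}$; any cruder estimate would degrade the bound. Apart from this, the argument is a direct continuous-variable transcription of the discrete proof of Lemma~\ref{locs}.
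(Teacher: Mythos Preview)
Your approach is correct and follows the same architecture as the paper: differentiate under the finite integral for $\bsigma_0$, integrate by parts repeatedly for the smoothness of $\bsigma_\infty$ on $\bbR_*\setminus\{0\}$, and split the integral at $N=1/|x|$ for the $L^\infty$ bound, handling the tail by one integration by parts.

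However, your ``main obstacle'' is illusory, and the paper's treatment of the boundedness is noticeably cleaner. You aim for the sharp estimate $|\bsigma_\kappa(x)|=O(\jap{\log|x|}^{-\alpha})$ and claim that ``any cruder estimate would degrade the bound''. This is not so: for $L^\infty$ one only needs $O(1)$, and the paper obtains this from the two elementary facts that $t\,\bq_\kappa(t)$ and $t^2\,\bq_\kappa'(t)$ are bounded (valid for both $\kappa=0$ and $\kappa=\infty$). With these, the near part is $\leq 2|x|\int_0^{1/|x|} t\,\bq_\kappa(t)\,dt \leq C$, the boundary term after IBP is $|x|^{-1}\bq_\kappa(1/|x|)\leq C$, and the remainder is $|x|^{-1}\int_{1/|x|}^\infty|\bq_\kappa'(t)|\,dt \leq C|x|^{-1}\int_{1/|x|}^\infty t^{-2}\,dt = C$. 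No Laplace asymptotics are needed, and the argument works uniformly in $\kappa$ and for all $x$ without the case distinctions $|x|\gtrless 1/c_1$, $|x|\gtrless 1/C_2$ that you introduce. Your sharper bound is of course true (and is the content of the paper's subsequent remark on the asymptotics of $\bsigma_\kappa$), but it is not required for the lemma.
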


\begin{proof}
Note that for all $x\in\bbR$, the first integral in   \eqref{eq:int}  converges absolutely while the second one 
converges absolutely
for $\alpha>1$ 
and conditionally for $\alpha\leq1$. 

Since the integral in the definition \eqref{eq:int} of $\bsigma_0$ is taken over a finite interval, 
we can differentiate this integral with respect to $x$ arbitrary many times. 
Hence $\bsigma_0\in C^\infty(\bbR)$. 
To prove that $\bsigma_\infty\in C^\infty (\bbR_{*}\setminus\{0\})$, we  
integrate by parts $2M+2$ times in the definition \eqref{eq:int}:
$$
\bsigma_\infty(x)= 2i
(-1)^{M+1} x^{-2M-2}
\int_{0}^\infty \bq_\infty^{(2M+2)}(t)\sin(xt)dt.
$$
Since $\bq_\infty^{(2M+2)}(t)=O(\abs{t}^{-2M-3})$ as $\abs{t}\to\infty$, 
we see that $\bsigma_\infty\in C^{m}(\bbR\setminus\{0\})$ and
$\bsigma^{(m)}_\infty(x)\to0$ for $m=0,1,\ldots, 2M+1$ as $\abs{x}\to\infty$. 
Finally, we use that $M$ is arbitrary.

It remains to prove that the functions $\bsigma_0$ and $\bsigma_\infty $ are bounded. Below $\kappa=0$ or $\kappa=\infty$. We may suppose that $x>0$. Write $\bsigma_\kappa=\bsigma_{\kappa}^{(1)}+\bsigma_{\kappa}^{(2)}$, where
$$
\bsigma_{\kappa}^{(1)}(x)
=
2i
\int_0^{1/ x}\bq_\kappa(t)\sin(xt)dt,
\quad
\bsigma_{\kappa}^{(2)}(x)
=
2i
\int_{1/ x}^\infty \bq_\kappa(t)\sin(xt)dt.
$$
Since $\abs{\sin(xt)}\leq xt$, for both functions $\bsigma_{\kappa}^{(1)}$ we have the estimate
$$
\abs{\bsigma_{\kappa}^{(1)}(x)}
\leq 
2 x\int_0^{1/ x}\bq_\kappa(t)tdt
\leq 
C
$$
because $\bq_\kappa(t)t $ are bounded functions.
For $\bsigma_{\kappa}^{(2)}$, integrating by parts once, we get 
$$
\bsigma_{\kappa}^{(2)}(x)
=
-\frac{2i}{x}
\int_{1/x}^\infty \bq_\kappa (t)(\cos(xt))'dt
=
\frac{2i}{x}
\bq_\kappa(1/x)\cos 1
+
\frac{2i}{x}
\int_{1/x}^\infty \bq_\kappa '(t) \cos(xt)dt.
$$
The first term in the right-hand side is bounded because
$\bq_\kappa (t)t$ are bounded functions. 
The second term is also bounded  because the functions
$ \bq_\kappa '(t) t^{2}$ are bounded. 
\end{proof}

\begin{remark}
(i)  Of course the singular supports of 
$\bsigma_{0}$ and $\bsigma_{\infty}$ are non-empty sets in $\bbR_{*}$, that is,  $  \sing\supp \bsigma_{0} = \{\infty\}$ and  $  \sing\supp \bsigma_{\infty} =\{0\}$. 
In fact, it can be verified that the symbols $\bsigma_0$, $\bsigma_\infty$ satisfy 
the asymptotics
\begin{align*}
\bsigma_0(x)&=\pi i\sign x\abs{\log\abs{x}}^{-\alpha}(1+o(1)), \quad x\to\infty,
\\
\bsigma_\infty(x)&=\pi i \sign x\abs{\log\abs{x}}^{-\alpha}(1+o(1)), \quad x\to0.
\end{align*}

(ii)   
For some values of $\alpha$, instead of the
odd extension of $\bq_0(t)$ and $\bq_\infty(t)$ to the negative $t$, 
one can extend them by zero, i.e. one can choose
\begin{align*}
\wt\bsigma_0(x)
&=
\int_0^\infty \bq_0(t)e^{ixt}dt, \quad \text{ if $\alpha<1$,}
\\
\wt\bsigma_\infty(x)
&=
 \int_0^\infty \bq_\infty(t)e^{ixt}dt, \quad \text{ if $\alpha>1$,}
\end{align*}
instead of $\bsigma_0(x)$, $\bsigma_\infty(x)$, respectively.
\end{remark}

Recall that the Hankel operators  in the Hardy space $H_{+}^2(\bbR)$ were  defined by formula \eqref{e3}. 
The next assertion plays the role of  Theorem~\ref{mod}.

\begin{theorem}\label{modco}
Let the functions $\bsigma_{0}$ and $\bsigma_\infty$ be defined by formulas \eqref{eq:int} where $\bq_{0}(t)$ and $\bq_\infty(t)$ are given by \eqref{eq:qq} and $\alpha>0$. 
Then the following asymptotic relations hold true:
$$
s_n(\bHank(\bsigma_{0}))= v(\alpha )n^{-\alpha}+o(n^{-\alpha}), \quad s_n(\bHank(\bsigma_{\infty}))
= v(\alpha )n^{-\alpha}+o(n^{-\alpha}),
\quad 
n\to\infty.
$$
\end{theorem}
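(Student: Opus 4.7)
The strategy is exactly parallel to the proof of Theorem~\ref{mod} in the discrete case, which used the unitary equivalence \eqref{b1} between $\Hank(\omega)$ and $\Gank(h)$ when $\wh\omega(j)=h(j)$ for $j\geq 0$. Here we use the continuous analogue, namely the unitary equivalence \eqref{e4}: for $\bomega\in L^\infty(\bbR)$ one has $\Phi\bHank(\bomega)\Phi^*=\bGank(\bh)$ whenever $\bh(t)=\frac{1}{\sqrt{2\pi}}\wh\bomega(t)$ for $t>0$ (this identity being understood in the distributional sense if needed). Since $\bsigma_0,\bsigma_\infty\in L^\infty(\bbR)$ by Lemma~\ref{locsC}, the Hankel operators $\bHank(\bsigma_0)$ and $\bHank(\bsigma_\infty)$ are well defined, and the task reduces to identifying their kernels on $\bbR_+$.

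The key computation is that $\bsigma_0$ and $\bsigma_\infty$ have been constructed precisely to match the odd extensions of $\bq_0$ and $\bq_\infty$ under Fourier transform, so their kernels are the model kernels of Theorem~\ref{thm.a1aC}. Concretely, denote by $\bq_\kappa^{\text{odd}}$ the odd extension to $\bbR$ of $\bq_\kappa$ for $\kappa\in\{0,\infty\}$. From the definition \eqref{eq:int},
\[
\bsigma_\kappa(x)
= 2i\int_0^\infty \bq_\kappa(t)\sin(xt)\,dt
= -\sqrt{2\pi}\,(\Phi \bq_\kappa^{\text{odd}})(x),
\]
where the integrals are understood in the principal-value/distributional sense if $\alpha\leq 1$ (the function $\bsigma_\infty$ is still bounded by Lemma~\ref{locsC}, so the above equality holds in $\mathcal{S}'(\bbR)$). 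Applying $\Phi$ to both sides and using $\Phi^2 u(x)=u(-x)$ together with the oddness of $\bq_\kappa^{\text{odd}}$, one obtains
\[
(\Phi\bsigma_\kappa)(t)=\sqrt{2\pi}\,\bq_\kappa^{\text{odd}}(t), \qquad t\in\bbR.
\]
In particular $\tfrac{1}{\sqrt{2\pi}}\wh\bsigma_\kappa(t)=\bq_\kappa(t)$ for $t>0$, which is exactly the compatibility condition in \eqref{e4}.

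Thus $\bHank(\bsigma_0)$ is unitarily equivalent to $\bGank(\bq_0)$ and $\bHank(\bsigma_\infty)$ to $\bGank(\bq_\infty)$, so the asymptotics
\[
s_n(\bHank(\bsigma_\kappa))=s_n(\bGank(\bq_\kappa))=v(\alpha)\,n^{-\alpha}+o(n^{-\alpha}),\quad n\to\infty,
\]
for $\kappa\in\{0,\infty\}$ follow immediately from Theorem~\ref{thm.a1aC}.

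The only genuine issue to watch is the meaning of the Fourier transform when $\alpha\leq 1$, since then the integral defining $\bsigma_\infty$ converges only conditionally and the sine-transform inversion formula is not automatic. This is, however, standard: the identity $\Phi^{-1}=\Phi^*$ applied to the bounded tempered distribution $\bsigma_\kappa$ yields the required identification of $\wh\bsigma_\kappa$ on $(0,\infty)$, as a distribution that happens to coincide on $(0,\infty)$ with the locally integrable function $\sqrt{2\pi}\,\bq_\kappa$. I expect this bookkeeping of Fourier conventions to be the only real step requiring attention; everything else is a direct transcription of the argument used for Theorem~\ref{mod}.
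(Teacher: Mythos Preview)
Your proof is correct and follows essentially the same route as the paper: verify that $\tfrac{1}{\sqrt{2\pi}}\wh\bsigma_\kappa(t)=\bq_\kappa(t)$ for $t>0$, invoke the unitary equivalence \eqref{e4}, and read off the asymptotics from Theorem~\ref{thm.a1aC}. One small remark: you locate the distributional subtlety for $\alpha\leq 1$ in the conditional convergence of the integral for $\bsigma_\infty$, whereas the paper points out that the more delicate case is actually $\bsigma_0$, since $\bq_0(t)=\chi_0(t)t^{-1}(\log(1/t))^{-\alpha}$ fails to be locally integrable near $t=0$ and its odd extension must be defined as a principal-value distribution (the integral for $\bsigma_0$ itself converges absolutely thanks to the factor $\sin(xt)$); your phrase ``principal-value/distributional sense'' covers this, but it is worth being explicit about which endpoint is causing the trouble.
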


\begin{proof}
Observe that
 \begin{equation}
\frac1{\sqrt{2\pi}}\wh\bsigma_0(t)=\bq_0(t),
\quad
\frac1{\sqrt{2\pi}}\wh\bsigma_\infty(t)=\bq_\infty(t),  
\quad t>0,
\label{eq:UE1}
\end{equation}
where the Fourier transform is understood  in the class of distributions $\mathcal{S}(\bbR)'$. 
Indeed, the second equality 
\eqref{eq:UE1} follows directly from definition  \eqref{eq:int} because $\bq_\infty\in \mathcal{S}(\bbR)'$. 
In order to prove the first equality 
in \eqref{eq:UE1}, we have to take into account that for $\alpha\leq 1$ the function $\bq_0(t)$ is not integrable in a neighborhood of the point $t=0$. Therefore we first extend $\bq_0$ by the formula
$$
\langle \bq_{0}^{(\ext)}, \varphi\rangle=\int_{0}^\infty \bq_{0}(t) (\bar{\varphi}(t)-\bar{\varphi}(-t))dt
$$
to the distribution $\bq_{0}^{(\ext)} \in \mathcal{S}(\bbR)'$. 
According to the first formula in  \eqref{eq:int}, 
the function $(2\pi)^{-1/2}{\bsigma_{0}}$ is the Fourier transform   of 
$\bq_{0}^{(\ext)}$.  
Thus $(2\pi)^{-1/2}\wh\bsigma_0(t)=\bq_{0}^{(\ext)}(t)$, which coincides with 
the first relation in \eqref{eq:UE1} for $t>0$.

In view of relation \eqref{e4}, it follows from \eqref{eq:UE1} that
$$
\Phi \bHank( {\bsigma_{0}} )\Phi^* = \bGank(\bq_{0})
\quad \text{and}\quad
\Phi \bHank( {\bsigma_{\infty}} )\Phi^* = \bGank(\bq_{\infty}).
$$
Therefore we only have to use  Theorem~\ref{thm.a1aC} to complete the proof. 
\end{proof}
 
\subsection{Shifts of symbols}

For a parameter $a\in\bbR$, let $\bR_a $ be the shift
$$
(\bR_a {\bf f})(x)= {\bf f}(x-a).
$$
Obviously, $\bR_a$ is a
unitary operator in $L^2 (\bbR)$ and
$H_+^2 (\bbR)$. 
Of course,   now $\bR_a$ is  not  a rotation, 
but we keep the letter $\bR$ in order to maintain the analogy between the 
discrete and continuous cases.

Similarly, let $\bV_a $ be the multiplication operator 
$$
(\bV_a {\bf u})(t)=e^{- iat} {\bf u} (t), \quad t> 0.
$$
Obviously, $\bV_a$ is a unitary operator  in $L^2 (\bbR)$ and in $L^2 (\bbR_+)$.

\begin{lemma}\label{zetaC} 
For arbitrary  $a\in\bbR$, we have the following statements: 
\begin{enumerate}[\rm(i)]
\item
For any $\bomega\in L^\infty (\bbR)$, we have
$$
\bHank(\bR_a\bomega)=\bR_a \bHank(\bomega) \bR_a.  
$$
In particular,  if $\bHank(\bomega)$ is compact, then
$$
s_n(\bHank(\bR_a\bomega))=s_n(\bHank(\bomega)), \quad \forall n\geq1.
$$

\item
Suppose that $\bGank({\bf h})$ is bounded; then 
$$
\bGank(\bV_a\bh)=\bV_a \bGank(\bh) \bV_a .
$$
In particular,  if $\bGank(\bh)$ is compact, then
$$
s_n(\bGank(\bV_a\bh))=s_n(\bGank(\bh)), \quad \forall n\geq1.
$$
\end{enumerate}
\end{lemma}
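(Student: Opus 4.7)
I would approach this lemma as an entirely direct computation, parallel to Lemma~\ref{zeta} in the discrete case. The only minor subtlety is that the shift $\bR_a$ does not commute with the involution $\binv$, so one must keep track of how they interact.

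For part (i), the plan is to establish two preliminary identities: that $\bR_a$ commutes with the projection $\bP_+$, and that $\bR_a\binv\bR_a=\binv$. The first follows on the Fourier side: since $\widehat{\bR_a f}(t)=e^{-iat}\wh f(t)$, the shift preserves the subspace $\{\wh f=0 \text{ on } t<0\}=H^2_+(\bbR)$, whence $\bP_+\bR_a=\bR_a\bP_+$. The second identity is a one-line direct computation from $(\bR_a f)(x)=f(x-a)$ and $(\binv f)(x)=f(-x)$. In particular, $\binv\bR_a=\bR_{-a}\binv$. Then, for $f\in H^2_+(\bbR)$,
\[
\bR_a\bHank(\bomega)\bR_a f = \bR_a\bP_+\bigl(\bomega\cdot\bR_{-a}\binv f\bigr)=\bP_+\bigl(\bR_a\bomega\cdot\bR_a\bR_{-a}\binv f\bigr)=\bP_+\bigl((\bR_a\bomega)\binv f\bigr),
\]
which is $\bHank(\bR_a\bomega)f$. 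The singular value equality then follows from unitarity of $\bR_a$.

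For part (ii), I see two equivalent routes. The shortest is to verify the identity directly from \eqref{a0}: for $\bu\in C_0^\infty(\bbR_+)$,
\[
\bigl(\bGank(\bV_a\bh)\bu\bigr)(t)=\int_0^\infty e^{-ia(t+s)}\bh(t+s)\bu(s)\,ds=e^{-iat}\bigl(\bGank(\bh)\bV_a\bu\bigr)(t)=\bigl(\bV_a\bGank(\bh)\bV_a\bu\bigr)(t),
\]
and extend by density. Alternatively, one may deduce (ii) from (i) via the unitary equivalence \eqref{e4}: since $\Phi\bR_a\Phi^*=\bV_a$ (as multiplication by $e^{-iat}$), conjugating the identity in (i) by $\Phi$ and using that the symbol of the kernel $\bV_a\bh$ is the shifted symbol $\bR_a\bomega$ yields the stated relation. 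The singular value equality again follows from unitarity of $\bV_a$.

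There is no real obstacle here; the computation is essentially a bookkeeping exercise. The only point that deserves care is verifying that $\bR_a\binv\bR_a=\binv$ (rather than $\binv\bR_a=\bR_a\binv$, which is false), so that the shifts on either side of $\bHank(\bomega)$ cancel correctly when pushed past the involution in the definition \eqref{e3}.
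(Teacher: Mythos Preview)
Your proposal is correct and follows essentially the same route as the paper: both proofs of (i) rest on the two identities $\bP_+\bR_a=\bR_a\bP_+$ and $\bR_a\binv\bR_a=\binv$, and both obtain (ii) directly from the integral definition \eqref{a0}. Your write-up simply supplies more detail (and an optional alternative derivation of (ii) via the Fourier conjugation), but the underlying argument is the same.
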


 \begin{proof}
 Since
$$
\bP_{+}\bR_a=\bR_a \bP_{+} \quad {\rm and} \quad \bR_a {\binv} \bR_a= {\binv},
$$
the first assertion is a direct consequence of the definition 
\eqref{e3}  of the Hankel operator $\bHank(\bomega) $ in $H^2(\bbR)$. The second assertion immediately follows from the definition \eqref{a0}. 
 \end{proof}

\subsection{Putting things together}
Let the symbols $\bsigma_{0}(x)$ and $\bsigma_\infty(x)$ be defined by relations \eqref{eq:int} and let
\[
\bomega_{\natural}(x)= \bomega_0(x) +\sum_{\ell=1}^L \bomega_\ell (x),\quad {\rm where}
 \quad \bomega_0 (x)=\bb_0 \bsigma_{0} (x ),  \quad \bomega_\ell (x)=\bb_\ell \bsigma_{\infty} (x-a_{\ell}).
\label{eq:SYMC}
\]
According to Theorem~\ref{modco} and Lemma~\ref{zetaC}(i)  we have
$$
s_n(\bHank(\bomega_\ell))=  | \bb_{\ell} | v(\alpha )n^{-\alpha}+o(n^{-\alpha}),  \quad 
n\to\infty,
$$
for all $\ell=0,1,\ldots,L$.
It follows from Lemma~\ref{locsC} that $\bomega_{\ell}\in L^\infty(\bbR )$ for all $\ell=0,1,\ldots,L$, $\bomega_0\in C^\infty(\bbR)$  and $\bomega_{\ell}\in C^\infty(\bbR_{*}\setminus a_{\ell})$ for   $\ell=1,\ldots,L$. Since  $a_1,\dots, a_L$ are distinct points, the localisation principle (Theorem~\ref{LOCALC}) is applicable to the sum \eqref{eq:SYMC}.
 This yields
\[
\lim_{n\to\infty}ns_n(\Hank(\bomega_\natural))^{p}
=
\sum_{\ell=0}^L
\lim_{n\to\infty}ns_n(\Hank( \bomega_\ell ))^{p}
=
v(\alpha)^{p}
\sum_{\ell=0}^L\abs{\bb_\ell}^{p},\quad p=1/\alpha.
\label{eq:a22C}
\]
By  definition \eqref{eq:SYMC}, we have

$$
\wh\bomega_0 (t)= \bb_{0}\wh\bsigma_0 (t)
\quad\text{ and }\quad
\wh\bomega_\ell (t)= \bb_{\ell}\wh\bsigma_{\infty} (t) e^{-i a_{\ell } t} , \quad \ell =1,\ldots, L.
$$
Therefore,  according to formulas  \eqref{eq:qq}  and \eqref{eq:UE1}, we have
$$
\frac{1}{\sqrt{2\pi}}\wh\bomega_{\natural}(t)=\bb_0 \chi_0 (t) t^{-1} |\log t|^{-\alpha} + \sum_{\ell=1}^L \bb_\ell \chi_{\infty} (t) t^{-1} |\log t|^{-\alpha}  e^{-i a_{\ell } t} =: \bh_{\natural} (t), \quad t> 0.
$$
In view of relation  \eqref{e4} it now follows from \eqref{eq:a22C} 
that
$$
\lim_{n\to\infty}ns_n(\bGank(\bh_{\natural}))^{p}
=
v(\alpha)^{p}\sum_{\ell=0}^L\abs{\bb_\ell}^{p}.
$$
Next, we consider the error term
$$
\bg(t)= \bh(t)-\bh_{\natural}(t)=\bg_0(t) +\sum_{\ell=1}^L   \bg_\ell(t) e^{-i a_{\ell } t}
$$
where all functions $ \bg_\ell(t) $, $\ell=0,1,\ldots,L$,  satisfy the condition  \eqref{z4} both  for $t\to 0$ and $ t\to\infty$.
It  follows from Theorem~\ref{thm.d1} and Lemma~\ref{zetaC}(ii) that $s_{n}(\bHank (\bg_\ell))=o(n^{-\alpha})$   and hence 
\[
s_{n}(\bHank ( \bg)) =o(n^{-\alpha})\quad {\rm as } \quad n\to\infty.
\label{eq:ERRC}
\]
Since
$$
\bHank(h)=\bHank(\bh_{\natural})+\bHank(\bg),
$$
we can use Lemma~\ref{lma.a2} with 
$A=\bHank(\bh_{\natural})$ and 
$B=\bHank(\bg)$.  The required relations \eqref{d11}, \eqref{eq:d11} follow from \eqref{eq:a22C} and \eqref{eq:ERRC}. 
\qed

\section{Local singularities of the kernel}\label{sec.f}

The localization principle shows that the results on the asymptotics of singular values of different Hankel operators can be combined provided that the singular supports  of their symbols are disjoint.  
This idea has already been illustrated by Theorems~\ref{thm.a4} and \ref{thm.d3}. 
Here we apply the same arguments to  kernels $\bh (t)$ satisfying condition \eqref{z2} as $t\to\infty$ and    singular at some  point $t_0>0$. Below $\1_+ (t)$ is the characteristic function of $\bbR_{+}$.

The effect of local singularities of $\bh (t)$ on the asymptotics 
of singular values of the corresponding Hankel operator $\bGank(\bh)$ 
was studied in \cite{Part} and later in \cite{Yafaev2}. The methods of these papers are quite different. 
We use the following result obtained in \cite{Yafaev2}.

\begin{lemma}\label{sing1}
Let $t_{0}>0$, $m\in {\bbZ}_{+}$ and  
\begin{equation}
{\bf a}_{m} (t)=  (t_0-t)^{m} \1_+ (t_{0}-t ) . 
\label{eq:sing1}\end{equation}
Then  $\Ker \bGank({\bf a}_{m}) =L^2 (t_0, \infty)$ and
$$
\bGank({\bf a}_{m})\big|_{L^2 (0, t_0)}= m! A_{m}^{-1} 
$$
where the self-adjoint operator $A_{m}$ in $L^2 (0,t_0)$ is defined by the differential expression
$$
 (A_{m} {\bf u})(t)= (-1)^{m+1}  {\bf u}^{(m+1)}(t_0-t)
$$
and the boundary conditions 
 \begin{equation}
 {\bf u} (t_0)=\cdots = {\bf u}^{(m)}(t_0)=0.
\label{eq:sing4}\end{equation}
\end{lemma}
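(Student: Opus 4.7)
The plan is to compute $\bGank(\mathbf{a}_m)$ explicitly, recognize it up to the reflection $t\mapsto t_0-t$ as a classical Volterra operator with polynomial kernel, and invert that operator by differentiating $m+1$ times. Inserting $\mathbf{a}_m$ into \eqref{a0}, the indicator in \eqref{eq:sing1} restricts the integration to $s\in(0,t_0-t)$, so
$$
(\bGank(\mathbf{a}_m)\bu)(t)=\int_0^{t_0-t}(t_0-t-s)^m\bu(s)\,ds,\quad t<t_0,
$$
and $(\bGank(\mathbf{a}_m)\bu)(t)=0$ for $t\geq t_0$. In particular $L^2(t_0,\infty)\subset\Ker\bGank(\mathbf{a}_m)$, the operator sends $L^2(\bbR_+)$ into $L^2(0,t_0)$, and it depends on $\bu$ only through $\bu|_{(0,t_0)}$.

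Setting $\tau=t_0-t$ and $\tilde{\mathbf{v}}(\tau):=(\bGank(\mathbf{a}_m)\bu)(t_0-\tau)$ yields the Riemann--Liouville-type identity
$$
\tilde{\mathbf{v}}(\tau)=\int_0^\tau(\tau-s)^m\bu(s)\,ds=m!\,(I^{m+1}\bu)(\tau),
$$
where $I$ denotes integration from $0$. Iterated Volterra integration is injective on $L^2(0,t_0)$, so the kernel is exactly $L^2(t_0,\infty)$, which proves the first assertion. Differentiating the Volterra identity $m+1$ times gives $\tilde{\mathbf{v}}^{(m+1)}(\tau)=m!\,\bu(\tau)$, which in the original variable reads $(-1)^{m+1}\mathbf{v}^{(m+1)}(t_0-t)=m!\,\bu(t)$, while the integral representation also yields $\tilde{\mathbf{v}}^{(k)}(0)=0$ for $k=0,\ldots,m$, i.e.\ the boundary conditions \eqref{eq:sing4}. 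Combined, these say $A_m\bGank(\mathbf{a}_m)\bu=m!\,\bu$ on $L^2(0,t_0)$, which is the claimed identity $\bGank(\mathbf{a}_m)|_{L^2(0,t_0)}=m!\,A_m^{-1}$.

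The main point requiring care is the self-adjointness of $A_m$, because the $m+1$ boundary conditions \eqref{eq:sing4} are imposed only at the single endpoint $t_0$. The cleanest route is to bypass a direct integration-by-parts verification and argue abstractly: the kernel $\mathbf{a}_m(t+s)$ is real and symmetric in $(t,s)$, so $\bGank(\mathbf{a}_m)$ is bounded self-adjoint on $L^2(\bbR_+)$; it leaves $L^2(0,t_0)$ invariant and, by the previous paragraph, restricts to a bijection from $L^2(0,t_0)$ onto the linear set described by \eqref{eq:sing4}. Its (unbounded) inverse on $L^2(0,t_0)$ is therefore self-adjoint, and the calculation above identifies this inverse with $m!^{-1}A_m$. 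A direct integration-by-parts check of symmetry of $A_m$ works too, but one should watch that it produces boundary terms at \emph{both} $0$ and $t_0$: they pair up correctly only because the reflection $t\mapsto t_0-t$ built into the differential expression interchanges the two endpoints, which is precisely what makes the one-sided conditions \eqref{eq:sing4} sufficient.
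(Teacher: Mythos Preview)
Your proof is correct. Note, however, that the paper does not actually prove this lemma: it is quoted from \cite{Yafaev2} and used as input. So there is no argument in the paper to compare yours against. Your approach---recognizing $\bGank(\mathbf{a}_m)$ after the reflection $t\mapsto t_0-t$ as $m!$ times the $(m{+}1)$-fold Volterra integration operator, reading off injectivity and the boundary conditions from the Riemann--Liouville formula, and deducing self-adjointness of $A_m$ from that of the bounded operator $\bGank(\mathbf{a}_m)$ restricted to the reducing subspace $L^2(0,t_0)$---is a clean and self-contained substitute for the citation. The only cosmetic point is that ``the linear set described by \eqref{eq:sing4}'' is not by itself a domain specification; but your abstract argument does not actually need an explicit description of $\Dom A_m$, only that it equals $\Ran\bigl(\bGank(\mathbf{a}_m)|_{L^2(0,t_0)}\bigr)$, which you have.
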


Note that the operator $A_{m}^2$   is given  by the differential expression
$$
(A_{m}^2 {\bf u})(t)= (-1)^{m+1}  {\bf u}^{(2m+2)}(t)
$$
and the boundary conditions \eqref{eq:sing4} and
 $$
 {\bf u}^{(m+1)} (0)=\cdots = {\bf u}^{(2m+1)}(0)=0.
$$
Thus $A_{m}^2$   is a regular differential operator and the asymptotics of its eigenvalues is given by the Weyl formula.
Therefore the following result is an immediate consequence of Lemma~\ref{sing1}. 

\begin{corollary}
Let the function ${\bf a}_{m} (t)$ be given by formula \eqref{eq:sing1}. Then
 \begin{equation}
s_{n} (\bGank({\bf a}_{m}))= m! t_{0}^{m+1}(\pi   n)^{-m-1} \big(1+ O (n^{-1})\big), \quad n\to \infty.
\label{eq:sing7}\end{equation}
\end{corollary}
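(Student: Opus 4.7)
The strategy is to reduce the problem to Weyl-type asymptotics for the eigenvalues of a regular selfadjoint differential operator of order $2m+2$, as suggested by the discussion immediately preceding the statement.

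First, by Lemma~\ref{sing1} the operator $\bGank({\bf a}_{m})$ annihilates $L^2(t_0,\infty)$ and acts on $L^2(0,t_0)$ as $m!\,A_m^{-1}$. Consequently, the non-zero singular values of $\bGank({\bf a}_m)$ coincide with those of $m!\,A_m^{-1}$, and since $A_m$ is selfadjoint, the singular values of $A_m^{-1}$ are $\lambda_n(A_m^{2})^{-1/2}$, where $\lambda_n(A_m^2)$ denotes the non-decreasing enumeration of the eigenvalues of the positive operator $A_m^2$. Thus
\[
s_n(\bGank({\bf a}_m))=m!\,\lambda_n(A_m^2)^{-1/2}.
\]

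Next, I would invoke the description of $A_m^2$ given just after Lemma~\ref{sing1}: it is the regular selfadjoint realization in $L^2(0,t_0)$ of $(-1)^{m+1}\partial_t^{2m+2}$ subject to the boundary conditions ${\bf u}(t_0)=\cdots={\bf u}^{(m)}(t_0)=0$ and ${\bf u}^{(m+1)}(0)=\cdots={\bf u}^{(2m+1)}(0)=0$. The Weyl formula for a regular ODE of order $2m+2$ on an interval of length $t_0$ yields
\[
\lambda_n(A_m^2)=\Bigl(\frac{\pi n}{t_0}\Bigr)^{2m+2}\bigl(1+O(n^{-1})\bigr),\quad n\to\infty,
\]
the leading term being independent of the separated boundary conditions at the endpoints. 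Substituting this into the previous identity gives
\[
s_n(\bGank({\bf a}_m))=m!\,\Bigl(\frac{t_0}{\pi n}\Bigr)^{m+1}\bigl(1+O(n^{-1})\bigr),
\]
which is precisely \eqref{eq:sing7}.

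The one step that requires genuine care is the sharpness of the remainder $O(n^{-1})$ in the Weyl law for $A_m^2$. For the leading term alone one could appeal to standard min-max / Dirichlet–Neumann bracketing with constant coefficients, but the quantitative remainder must come from the classical theory of regular selfadjoint ordinary differential operators (e.g.\ Birkhoff-type asymptotic analysis of the characteristic equation obtained from the boundary conditions), where eigenvalues are known to satisfy $\lambda_n^{1/(2m+2)}=\pi n/t_0+O(1/n)$. This is where I would expect the main technical effort, though it is entirely classical; everything else is bookkeeping.
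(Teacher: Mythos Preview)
Your proposal is correct and follows exactly the route sketched in the paper: reduce via Lemma~\ref{sing1} to $m!\,A_m^{-1}$, pass to $A_m^2$, and apply the Weyl law for a regular selfadjoint ODE of order $2m+2$ on $(0,t_0)$. The paper states the corollary as an ``immediate consequence'' of Lemma~\ref{sing1} together with the Weyl formula, so your write-up simply makes the implicit steps explicit (including the remainder discussion).
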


Notice that formula \eqref{eq:sing7} was obtained much earlier in \cite{Part}
by a completely different method. 

We also note the explicit formula for the symbol $ {\boldsymbol{\tau}}_{m}(x)$  of the operator $\bGank({\bf a}_{m})$:
 \begin{equation}
 {\boldsymbol{\tau}}_{m}(x)= m! (ix)^{-m-1} \big( e^{i t_0 x } -\sum_{k=0}^m \frac{1} {k!} (i t_{0} x)^k
\big), \quad x\in\bbR.
\label{eq:sing6}
\end{equation}
Obviously, $ {\boldsymbol{\tau}}_{m}\in C^\infty (\bbR)$ and $ {\boldsymbol{\tau}}_{m}(x)$ is an oscillating function as $|x|\to\infty$.

We are now in a position to consider the general case.

\begin{theorem}\label{sing}
Let $t_{0}>0$, $m\in {\bbZ}_{+}$ and $\beta\in {\bbC}$.  Set
$$
\bh_{m} (t)= {\tt b} (t_{0}-t)^{m} \1_+ (t_{0}-t ) + \bh (t)
$$
where $\bh (t)$ satisfies the assumptions of Theorem~$\ref{thm.d3} $ with $\bb_{0}=0$ and $\alpha=m+1$. 
Then the singular values  of the operator $ \bGank(\bh_{m})$ satisfy the asymptotic 
\[
s_{n} (\bGank(\bh_{m}))= {\bf c}_{m} n^{-m-1} + o(n^{-m-1})
\label{eq:sloc}
\]
with
$$
{\bf c}_{m}= \Big( \pi^{-1}  t_0 (m! |{\tt b}  |)^{1/\alpha }+v (\alpha)^{1/\alpha} \sum_{\ell=1}^L \abs{b_\ell}^{1/\alpha}\Big)^\alpha,
\quad \alpha=m+1,
$$
and  
$v (\alpha)$ defined by \eqref{a7}.
\end{theorem}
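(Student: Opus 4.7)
The strategy is to apply the continuous-case localization principle (Theorem~\ref{LOCALC}) to a pair of Hankel operators whose symbols are singular at disjoint parts of $\bbR_*$: one symbol captures the local singularity of $\bh_m$ at the point $t_0$, the other captures the oscillating terms at infinity.

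First I would identify the symbol of ${\tt b}\,\bGank({\bf a}_m)$. By \eqref{eq:sing6} and the unitary equivalence \eqref{e4}, this operator is equivalent to $\bHank({\tt b}\,{\boldsymbol{\tau}}_m)$. A Taylor expansion of $e^{it_0 x}$ at the origin shows that the bracket in \eqref{eq:sing6} vanishes to order $m+1$ at $x=0$, cancelling the factor $(ix)^{-m-1}$, so ${\boldsymbol{\tau}}_m$ extends to a $C^\infty$ function on $\bbR$. At infinity, the leading term $m!(ix)^{-m-1}e^{it_0 x}$ is bounded but oscillates, so the limits at $\pm\infty$ disagree and condition \eqref{eq:id1} fails. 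Consequently $\sing\supp{\boldsymbol{\tau}}_m=\{\infty\}$, while ${\tt b}\,{\boldsymbol{\tau}}_m\in L^\infty(\bbR)$.

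Next, following the proof of Theorem~\ref{thm.d3} verbatim, I would decompose $\bh=\bh_\natural+\bg$, where, since $\bb_0=0$,
$$
\bh_\natural(t)=\sum_{\ell=1}^L \bb_\ell\,\chi_\infty(t)\,t^{-1}(\log t)^{-\alpha}e^{-ia_\ell t},
$$
and $\bGank(\bg)\in\Sch^0_{p,\infty}$ for $p=1/\alpha$ by Theorem~\ref{thm.d1}. By \eqref{e4} and Lemma~\ref{locsC}, $\bGank(\bh_\natural)$ is unitarily equivalent to $\bHank(\bomega_\natural)$ with $\bomega_\natural=\sum_{\ell=1}^L\bb_\ell\bsigma_\infty(\,\cdot\,-a_\ell)\in L^\infty(\bbR)$ and $\sing\supp\bomega_\natural\subset\{a_1,\dots,a_L\}$. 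Since $\{a_1,\dots,a_L\}$ and $\{\infty\}$ are disjoint in $\bbR_*$, Theorem~\ref{LOCALC} (with $L=2$) applies to the sum $\bomega_\natural+{\tt b}\,{\boldsymbol{\tau}}_m$. The individual limits $\lim_n ns_n(\bHank(\bomega_\natural))^p$ and $\lim_n ns_n(\bHank({\tt b}\,{\boldsymbol{\tau}}_m))^p$ are supplied respectively by Theorem~\ref{thm.d3} (applied with $\bb_0=0$) and by the corollary to Lemma~\ref{sing1}; raising the asymptotics $v(\alpha)(\sum_\ell|\bb_\ell|^{1/\alpha})^\alpha n^{-\alpha}$ and $|{\tt b}|m!t_0^{m+1}(\pi n)^{-m-1}$ to the power $p=1/\alpha$ and adding gives exactly ${\bf c}_m^{1/\alpha}$. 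Absorbing the remaining error $\bGank(\bg)\in\Sch^0_{p,\infty}$ via Lemma~\ref{lma.a2} then yields \eqref{eq:sloc}.

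The step I expect to be the principal obstacle is the verification that $\sing\supp{\boldsymbol{\tau}}_m=\{\infty\}$: this is the only ingredient not already packaged in the lemmas cited above, and depends on the precise cancellation in \eqref{eq:sing6} at $x=0$ together with the non-existence of limits at $\pm\infty$ of the oscillatory factor. Once this disjointness is established, the rest of the argument is a direct transcription of the proof of Theorem~\ref{thm.d3}, with the local singularity at $t_0$ playing the role formerly played by the singularity at $t=0$.
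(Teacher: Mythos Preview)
Your proposal is correct and follows essentially the same route as the paper: identify that the symbol ${\boldsymbol{\tau}}_m$ of $\bGank({\bf a}_m)$ is $C^\infty$ on $\bbR$ with singular support $\{\infty\}$, observe (via Lemma~\ref{locsC} after removing the $\Sch^0_{p,\infty}$ error terms using Theorem~\ref{thm.d1}) that the symbol of $\bGank(\bh)$ has singular support $\{a_1,\dots,a_L\}\subset\bbR$, then apply Theorem~\ref{LOCALC} and combine the individual asymptotics from the Corollary and Theorem~\ref{thm.d3}. The paper's proof is terser but the ingredients and their order of assembly are the same.
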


\begin{proof}
 It is almost the same as that of Theorem~\ref{thm.d3}. Let us use  notation \eqref{eq:sing1}. 
 The asymptotics of the singular values of the operator  $ \bGank(  {\bf a}_{m})$  is given by formula \eqref{eq:sing7}.
 The operator $ \bGank(\bh )$ satisfies the assumptions of Theorem~\ref{thm.d3} so that the asymptotics of its eigenvalues is given by formula \eqref{d11}.  The symbol \eqref{eq:sing6} of the operator  $ \bGank(  {\bf a}_{m})$ is singular only at infinity. Neglecting the terms satisfying the assumptions of   Theorem~\ref{thm.d1} and using Lemma~\ref{locsC}, we see that the singular support of the symbol of the operator $ \bGank(\bh )$ consists of the points $a_{1}, \ldots, a_{L} \in \bbR$. Therefore applying Theorem~\ref{LOCALC}, we conclude the proof.
\end{proof}

\begin{remark}\label{sire}
We have chosen $\alpha=m+1$ in Theorem~\ref{sing} since in this case both the local singularity and the ``tail" of $h(t)$ at infinity contribute to the asymptotic coefficient ${\bf c}_{m} $ in \eqref{eq:sloc}.
\end{remark}

 Observe that we have excluded the term  \eqref{z3} singular at $t=0$ in Theorem~\ref{sing}  because the corresponding symbol is singular
 at the same point $x=\infty$ as the function \eqref{eq:sing6}. In this case one might expect that the   contributions of singularities of $\bh (t)$ at $t=0$ and $t=t_{0} >0$ are not independent of each other.
 In any case, our technique does not allow us to treat this situation.
 
For the function \eqref{eq:sing1},
let us discuss the operator $\bGank({\bf a}_{m})$ in the representation $\ell^2 (\bbZ_{+})$, that is, the operator
$$
\mathcal{F}\mathcal{U}^* {\bf H} ({\boldsymbol{\tau}}_{m})\mathcal{U}\mathcal{F}^*= \Gamma (a_{m}).
$$
Here $a_{m} (j)$ are the Fourier coefficients of the function $\tau_{m}(\mu)$ linked to ${\boldsymbol{\tau}}_{m}(x)$ by formula   \eqref{eq:HG}. 
Making the change of   variables   \eqref{b8a} in  \eqref{eq:sing6}, we see that $\tau_{m}(\mu)$ is an oscillating function as $\mu\to 1$. Therefore the asymptotics of its Fourier coefficients $a_m(j)$ is determined by the stationary phase method which yields:
  $$
  a_m(j)\sim m! \pi^{-1/2} 2^{- (2m+1)/4} j^{-(2m+5)/4} \cos \big(2\sqrt{2 j} -\pi (2m+1)/4\big).
  $$
  Note that   these sequences decay faster as $j\to\infty$ than the matrix elements
  \eqref{a13}  (for any $\alpha$). Nevertheless due to the    oscillating factor their contribution to the asymptotics of singular values of the Hankel operator $\Gamma(a_{m}+ h)$ is of the same order.

\section*{Acknowledgements}
The authors are grateful to the Departments of Mathematics of King's College London and of the University of Rennes 1 (France)  for the financial support. The second author (D.Y.) acknowledges also the  support and hospitality of the Isaak Newton Institute for Mathematical Sciences (Cambridge  University, UK) where a part of this work has been done during the program Periodic and Ergodic Spectral Problems.

\end{document}